\newtheorem{thm}{Theorem}[section]
\newtheorem{lem}{Lemma}[section]
\newtheorem{prop}{Proposition}[section]
\newtheorem{rem}{Remark}[section]
\theoremstyle{definition}
\begin{document}
\numberwithin{equation}{section}

 \title[  The Neumann problem  for the   $k$-Cauchy-Fueter complex  and the $L^2$ estimate]{
  The Neumann problem  for the   $k$-Cauchy-Fueter complex  over  $k$-pseudoconvex domains in $\mathbb{R}^4$ and the $L^2$ estimate}
\author {Wei Wang}
\begin{abstract}The $k$-Cauchy-Fueter operator  and complex
are quaternionic counterparts of the Cauchy-Riemann operator and the  Dolbeault complex   in the theory of several complex variables, respectively. To  develop the function theory of several
quaternionic variables, we need to solve the non-homogeneous $k$-Cauchy-Fueter equation  over  a domain
  under the   compatibility condition, which naturally leads to a  Neumann problem.
The method of solving the $\overline{\partial}$-Neumann problem in the theory of several complex variables is applied to  this  Neumann problem. We introduce notions of $k$-plurisubharmonic functions and     $k$-pseudoconvex domains, establish the     $L^2$ estimate   and solve the  Neumann problem    over  $k$-pseudoconvex domains in $\mathbb{R}^4$. Namely, we get a vanishing theorem for the first cohomology group   of the $k$-Cauchy-Fueter complex over  such domains.
\end{abstract}

\thanks{Department of Mathematics, Zhejiang University, Zhejiang 310027, P. R.  China,
Email: wwang@zju.edu.cn.}

\thanks{Supported by National Nature Science
Foundation
  in China (No. 11571305)}
 \maketitle

\section{Introduction}The $k$-Cauchy-Fueter operators, $k=0,1,\ldots$,
are quaternionic counterparts of the Cauchy-Riemann operator   in   complex analysis.
The $k$-Cauchy-Fueter complexes  over multidimensional quaternionic space, which
  play  the role  of Dolbeault complex   in the theory of several complex variables,  are now explicitly known \cite{Wa10} (cf. also  \cite{Ba}  \cite{bS} and in particular \cite{bures} \cite{CSS}
  \cite{CSSS} for $k=1$).
It is quite interesting to  develop the function theory of several
quaternionic variables by analyzing these complexes, as it has been done for the Dolbeault complex. A well known
theorem in the theory of several complex variables states that  the first Dolbeault cohomology of a  domain vanishes if and only if it is pseudoconvex. Many remarkable
results about holomorphic functions can be deduced by considering the non-homogeneous $\overline{\partial}$-equation, which leads to
the study of the  $\overline{\partial}$-Neumann problem (cf. e.g.  \cite{CNS} \cite{CS} \cite{FK} \cite{FS} \cite{K} \cite{K2}). Even on one dimensional quaternionic space, i.e.
  $\mathbb{R}^4$, the   $k$-Cauchy-Fueter complexes
\begin{equation}\begin{split}
0\rightarrow C^\infty \left(\Omega, \odot^{k }\mathbb{C}^{2 }  \right) &
\xrightarrow{{\mathscr D}_0^{(k)}}C^\infty \left(\Omega, \odot^{k-1
}\mathbb{C}^{2 } \otimes \mathbb{C}^{2 } \right)
\xrightarrow{{\mathscr D}_1^{(k)}} C^\infty \left(\Omega,\odot^{k-2
}\mathbb{C}^{2 } \otimes \Lambda^2 \mathbb{C}^{2 } \right)  \rightarrow
0,\end{split}\label{eq:quaternionic-complex-diff}
\end{equation}$k=2,3,\ldots$, are nontrivial,
  where $\Omega$ is a domain in $\mathbb{R}^4$,  $\odot^{p}\mathbb{C}^{2 } $ is the $p$-th symmetric power of $ \mathbb{C}^{2 } $ and $\Lambda^{2} \mathbb{C}^{2 } $ is  the exterior  power  of $ \mathbb{C}^{2 } $.  The first operator  ${\mathscr D}_0^{(k)}$ is  called the
{\it $k$-Cauchy-Fueter operator}.
The  non-homogeneous $k$-Cauchy-Fueter equation
\begin{equation}\label{eq:k-CF-1-eq}
 \mathscr   D_0^{(k) } u=f
\end{equation}over a   domain $\Omega $ is overdetermined, and only can be solved under the {\it   compatibility condition}
\begin{equation}\label{eq:compatibility}
 \mathscr     D_1^{(k) }f=0.
\end{equation}  It is known \cite{CMW} that (\ref{eq:k-CF-1-eq})-(\ref{eq:compatibility}) is solvable over a bounded domain $ {\Omega}$ in $\mathbb{R}^4$ with smooth boundary when $f$ is orthogonal to the
{\it first cohomology group} of the $k$-Cauchy-Fueter complex
\begin{equation*}
    H^1_{ (k) }(\overline{\Omega}):= \ker \mathscr D_1^{(k) }/
  {\rm Im}\,   \mathscr D_0^{(k)}
\end{equation*}which is  finite dimensional,
where $\overline{\Omega}$ is the closure of   $ {\Omega}$. We also know the solvability of (\ref{eq:k-CF-1-eq})-(\ref{eq:compatibility}) over $\mathbb{R}^{4n}$ \cite{Wa10} \cite{Wa-weighted}, from which we can drive Hartogs' phenomenon for {\it $k$-regular functions}, i.e. functions annihilated by the $k$-Cauchy-Fueter operator.  The purpose of this paper is to   solve   (\ref{eq:k-CF-1-eq})-(\ref{eq:compatibility}) under  certain geometric conditions for the domain $\Omega$, i.e. to obtain a  vanishing theorem for the first cohomology group  of the $k$-Cauchy-Fueter complex. See also \cite{Wa-mfd} for a vanishing theorem and Weitzenb\"ock  formula for the $k$-Cauchy-Fueter complex   over   curved  compact quaternionic K\"ahler manifolds   with negative scalar curvature.

In the quaternionic case, we have a family of operators  acting on  $ \odot^{k} \mathbb{C}^2 $-valued functions, because the group of unit quaternions,  SU$(2)$,  has  a family of irreducible representations $ \odot^{k} \mathbb{C}^2 $, $k=0,1,\ldots$,   while the group of unit complex numbers, $S^1$,  has only one irreducible representation space $\mathbb{C}$.
 The $k$-Cauchy-Fueter operators over  $\mathbb{R}^{4 }$ also have the origin in physics: they are
the elliptic version of {\it spin $  k/ 2$  massless field} operators (cf. e.g. \cite{CMW} \cite{EPW} \cite{PR1} \cite{PR2}) over the
Minkowski space.
    $\mathscr D_0^{(1)}\phi=0$ corresponds to the Dirac-Weyl equation   whose solutions correspond to neutrinos;
  $\mathscr D_0^{(2)}\phi=0$ corresponds to  the  Maxwell equation   whose  solutions correspond to
photons;
  $\mathscr D_0^{(3)}\phi=0$ corresponds to   the Rarita-Schwinger
equation;
   $\mathscr D_0^{(4)}\phi=0$ corresponds to   linearized Einstein's equation
 whose  solutions correspond to  weak gravitational fields; etc.

The main difference between the $k$-Cauchy-Fueter complexes and the Dolbeault complex   is that there exist symmetric forms except for   exterior forms. Analysis of  exterior forms is classical, while   analysis of symmetric forms is relatively new. We can  handle such forms by using two-component  notation. Such notation is used by physicists as  two-spinor  notation for the   massless field  operators (cf. e.g. \cite{PR1} \cite{PR2} and references therein). It also appears in the study  of quaternionic manifolds (cf. e.g. \cite{Wa-mfd} and references therein).
We will use complex vector fields in two-component  notation:
  \begin{equation}\label{eq:k-CF} (Z_{AA'}):=\left(
                             \begin{array}{rr}
                               Z_{00'  } &  {Z}_{01' } \\
                               Z_{10'  } &  {Z}_{11' } \\
                                                            \end{array}
                           \right):=\left(
                                      \begin{array}{cc}
                                     \partial_{x_1} +\textbf{i}\partial_{x_2}  & - \partial_{x_3} -\textbf{i}\partial_{x_4}  \\
                                      \partial_{x_3}-\textbf{i}\partial_{x_4}  &\quad\partial_{x_1}-\textbf{i}\partial_{x_2}  \\
                                                                               \end{array}
                                    \right),
\end{equation} where $A=0,1 $, $A'=0',1'$, which  are motivated by the embedding of the  quaternion  algebra   into the algebra of complex $2\times 2$-matrices:
\begin{equation*}
 x_{1}+\mathbf{i}x_{2}+\mathbf{j} x_{3}+\mathbf{k}x_{4}   \longmapsto
 \left(\begin{array}{rr} x_{1} +\textbf{i}x_{2}  & - x_{3} -\textbf{i}x_{4}  \\
                                       x_{3}-\textbf{i}x_{4}  &\hskip 3mm x_{1}-\textbf{i}x_{2}
 \end{array}\right).
 \end{equation*}

 In this paper, the method of  solving the $\overline{\partial}$-Neumann problem is extended to solve the corresponding  Neumann problem  for the   $k$-Cauchy-Fueter complexes.   For simplicity, we will drop the superscript $(k)$.
Given a nonnegative measurable function $\varphi$, called a {\it weight  function}, consider the Hilbert  space $L_\varphi^2( \Omega, \mathbb{C}   )$   with the  weighted inner product
\begin{equation}\label{eq:inner-product}
   (a,b)_\varphi:=\int_{ \Omega} a\overline{b}e^{- \varphi}dV,
\end{equation}
where $dV$ is the Lebegues'   measure on $\mathbb{R}^{4 }$.   It induces naturally a weighted $L^2$ inner product  on   $ L_\varphi^2(\Omega,\odot^{p
}\mathbb{C}^{2 } \otimes\Lambda^q \mathbb{C}^{2 })$.
We need to consider ${\mathscr D}_0$ and ${\mathscr D}_1$ as densely defined operators between Hilbert spaces
\begin{equation}\label{eq:quaternionic-complex-diff-L2}
   L_\varphi^2(\Omega,  \odot^{k
}\mathbb{C}^{2 } )\xrightarrow{{\mathscr D}_0}L_\varphi^2(\Omega,\odot^{k-1
}\mathbb{C}^{2 } \otimes \mathbb{C}^{2 } )\xrightarrow{{\mathscr D}_1}L_\varphi^2(\Omega,\odot^{k-2
}\mathbb{C}^{2 } \otimes\Lambda^2 \mathbb{C}^{2 }),
\end{equation}  given by
\begin{equation}\label{eq:k-CF-}\begin{split}
  ({\mathscr D}_0 u)_{A_2'\ldots A_{k  }'A }:  & =\sum_{A '=0',1'}Z^{A '}_Au_{ A 'A_2'\ldots A_{k  }'  } ,\qquad\qquad {\rm for}\quad u\in C^1(\Omega,  \odot^k \mathbb{C}^{2 }   ), \\
    ({\mathscr D}_1 f)_{ A_3'\ldots A_{k  }'AB}:&= \sum_{A'=0',1'}  Z^{A'}_{[ A}f_{B]A' A_3'\ldots A_{k  }' } ,\qquad\quad {\rm for}\quad f\in C^1(\Omega,  \odot^{k-1
}\mathbb{C}^{2 } \otimes \mathbb{C}^{2 }     ),
\end{split}\end{equation}  where  $ A,B =0,1 $, $ A_2',\ldots, A_{k  }'=0',1'$. Here and in the sequel, $Z_A^{A '}$ and $Z_{A '}^A$  are obtained by raising indices (cf. (\ref{eq:Z_A^A'})), and we   use the notation
$
    f_{ A  A_2 '\ldots A_k'  }:=f_{ A_2 '\ldots A_k' A   }
$
 for convenience. $
H_{[A  B ]  }:=\frac 12 ({H_{ A  B   }}-{H_{B  A  }})
 $ is the {\it antisymmetrisation}.  ${\mathscr D}_1\circ {\mathscr D}_0 =0$ can be checked directly (cf. (\ref{eq:exact})).

For a $C^2$ real function $\varphi$, define
\begin{equation}\label{eq:Levi}
   \mathscr L_k(\varphi;\xi)(x): =-k \sum_{ A,B,A_1', \ldots,A_k'}    Z_{B}^{A_1'}Z_{ (A_1 ' }^{ A} \varphi(x)\cdot \xi_{ A_2'\ldots  A_k') A}
  \overline{ \xi_{ A_2'\ldots A_k' B } }
\end{equation}for any $(\xi_{ A_2 '\ldots A_k' A })\in\odot^{k-1}\mathbb{C}^{2 }\otimes \mathbb{C}^{2 }$, where $(A_1'\ldots A_k')$ is symmetrisation    of indices
(cf. (\ref{eq:sym-0})).
A $C^2$ real function $\varphi$ on $\Omega$ is called  {\it (strictly)  $k$-plurisubharmonic} if   there exists a constant $c\geq 0$ ($c>0$) such that
\begin{equation*}  \mathscr L_k(\varphi;\xi)(x)
\geq    c  |\xi  |^2,
\end{equation*}for any $x\in \Omega$ and $\xi \in\odot^{k-1}\mathbb{C}^{2 }\otimes \mathbb{C}^{2 }$.
A domain in $\mathbb{R}^4$ is called {\it (strictly) $k$-pseudoconvex}
if   there exists a defining function $r$ and a constant  $c\geq 0$ ($c>0$) such that
\begin{equation}\label{eq:pseudoconvex} \mathscr L_k(r;\xi)(x)
\geq    c  |\xi  |^2,\qquad x\in \partial\Omega,
\end{equation}  for
any $(\xi_{ A_2 '\ldots A_k' A})\in\odot^{k-1}\mathbb{C}^{2 }\otimes \mathbb{C}^{2 }$ satisfying
\begin{equation}\label{eq:pseudoconvex-tangent}
   \sum_{A=0,1 } Z_{(A_1'}^Ar(x)\cdot \xi_{A_2 '\ldots A_k' ) A}= 0,\qquad x\in \partial\Omega,
\end{equation} for any $A_1 '\ldots A_k' =0',1'$.
  It plays the role of the Levi
form   in   several complex variables. We will see that
    $k$-pseudoconvexity  of a domain is independent of the choice of   defining functions (cf. Proposition \ref{prop:k-pseudoconvexity}). The space of vectors  $\xi  $ satisfying (\ref{eq:pseudoconvex-tangent}) is   of dimension $\geq 2k-(k+1)=k-1$, since there are $k+1$ equations in (\ref{eq:pseudoconvex-tangent}).
  Note that   $k$-plurisubharmonic functions and $k$-pseudoconvex domains are abundant
  since $\chi_1(x_1^2+x_2^2 )$, $\chi_2( x_3^2+x_4^2)$ and their sum are (strictly) $k$-plurisubharmonic  for any increasing smooth (strictly) convex functions $\chi_1$ and $\chi_2$ over $[0,\infty)$ (cf. Proposition \ref{prop:abundant}), and a small perturbation of a strictly  $k$-plurisubharmonic function is still strictly  $k$-plurisubharmonic.

 Consider the {\it associated Laplacian operator} $\Box_{\varphi }: L_\varphi^2( \Omega,    \odot^{k-1
}\mathbb{C}^{2 } \otimes \mathbb{C}^{2 })\longrightarrow L_\varphi^2( \Omega,   \odot^{k-1
}\mathbb{C}^{2 } \otimes \mathbb{C}^{2 })$ given by
\begin{equation}\label{eq:Box-varphi}
   \Box_{\varphi }:={\mathscr D}_0{\mathscr D}_0^*  + {\mathscr D}_1^*{\mathscr D}_1,
\end{equation}
where ${\mathscr D}_0^* $ and $ {\mathscr D}_1^*$ are the adjoint operators of densely defined operators ${\mathscr D}_0  $ and $ {\mathscr D}_1 $, respectively, and
\begin{equation*}
   {\rm Dom} (\Box_{\varphi }):= \left\{ f\in L_\varphi^2( \Omega,\odot^{k-1
}
 \mathbb{C}^{2 } \otimes \mathbb{C}^{2 }  ); f\in {\rm Dom} ({\mathscr D}_0^*) \cap  {\rm Dom}  ({\mathscr D}_1), {\mathscr D}_0^* f\in {\rm Dom} ({\mathscr D}_0)  ,   {\mathscr D}_1 f\in {\rm Dom}  ({\mathscr D}_1^*)  \right\}.
 \end{equation*}
 We can show that
 $ C^\infty \left(\overline{\Omega},
 \odot^{k-1
}\mathbb{C}^{2 } \otimes \mathbb{C}^{2 }\right)\cap {\rm Dom} ({\mathscr D}_0^*)  $ is dense in ${\rm Dom} ({\mathscr D}_0^*) \cap  {\rm Dom}  ({\mathscr D}_1)$ by the density Lemma \ref{prop:domain-k}, and $f\in C^1 \left(\overline{\Omega},
  \odot^{k-1
}\mathbb{C}^{2 } \otimes \mathbb{C}^{2 }\right)\cap {\rm Dom} ({\mathscr D}_0^*)$ if and only if
 \begin{equation}\label{eq:boundary-condition}
  \sum_{A=0,1 } Z_{(A_1'}^Ar(x)\cdot f_{A_2 '\ldots A_k' ) A}= 0,\qquad x\in \partial\Omega,
\end{equation}on the boundary $\partial\Omega$, for any $A_1 '\ldots A_k' =0',1'$.
Moreover, $F\in C^1 \left(\overline{\Omega},
\odot^{k-2} \mathbb{C}^{2 }\otimes \Lambda^2 \mathbb{C}^{2 }\right)\cap {\rm Dom} ({\mathscr D}_1^*) $ if and only if $ \sum_{B=0,1 } Z_{ (A_2'}^Br\cdot F_{A_3 '\ldots A_k') B A  }=0$.
So we need to  consider the following Neumann problem
\begin{equation}\label{eq:Neumann}
  \left\{
  \begin{array} {l}\Box_{\varphi } f=g,\hskip 66.5mm {\rm in } \quad \Omega, \\
  \sum_{A=0,1 } Z_{(A_1'}^Ar(x)\cdot f_{A_2 '\ldots A_k' ) A}=0, \qquad\hskip 23.5mm  {\rm on } \quad\partial\Omega,
   \\  \sum_{B=0,1 }\sum_{B'=0',1'} Z_{ (A_2'}^Br\cdot Z^{B'}_{|[A}f_{B] |A_3 '\ldots A_k') B' }=0 ,
  \qquad \hskip 1.5mm {\rm on } \quad\partial\Omega,
  \end{array}
  \right.
\end{equation}  for any $A_1 '\ldots A_k' =0',1'$,  where $(\cdots|\mathscr A | \cdots)$ means symmetrisation of indices   except for that in   $\mathscr{A}$.
The key step   is to establish the following    $L^2$ estimate (cf. e. g. \cite{CS} \cite{Hor-L2} for the   $L^2$ estimate for   the $\overline{\partial}$ operator).

\begin{thm}\label{thm:L2-estimate}For fixed $k\in \{2,3,\ldots\}$,
   let $\Omega$ be a bounded $k$-pseudoconvex domain in $\mathbb{R}^4$  with smooth boundary and let  $\varphi$ be  a smooth strictly $k$-plurisubharmonic function, i.e.   \begin{equation}\label{eq:Levi0}
     \mathscr L_k( \varphi;\xi) (x)\geq c  |\xi |^2,\qquad x\in \Omega,
    \end{equation} for some $c>0$ and any $\xi\in\odot^{k-1
}\mathbb{C}^{2 } \otimes\mathbb{C}^{2 }$. Suppose that
 \begin{equation}\label{eq:C0}
   C_0:=\frac {c- 4\|d\varphi\|_\infty^2}{2k+14 } >0,\end{equation}
  where $\|d\varphi\|_\infty^2=\sum_{j=1}^4 \|\frac {\partial\varphi}{\partial x_j}\|_{L^\infty(\Omega)}^2$.
    Then the $L^2$-estimate
    \begin{equation}\label{eq:L2-k}
C_0 \left \| f \right\|^2_{   \varphi  }  \leq      \left\| {\mathscr D}_0^*f  \right\|^2_{ \varphi } +  \left\|{\mathscr D}_1 f \right\|^2_{ \varphi }
    \end{equation}holds
    for any $f\in {\rm Dom} ( {\mathscr D}_0^*)\cap {\rm Dom} ( {\mathscr D}_1)$.
 \end{thm}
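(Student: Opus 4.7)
The plan is to adapt the Morrey--Kohn--H\"ormander integration-by-parts identity from the $\overline{\partial}$-Neumann theory to the two-spinor setting. By the density Lemma cited in the excerpt, it suffices to establish (\ref{eq:L2-k}) for $f\in C^\infty(\overline{\Omega},\odot^{k-1}\mathbb{C}^2\otimes \mathbb{C}^2)\cap {\rm Dom}(\mathscr D_0^*)$, i.e. for smooth $f$ satisfying the boundary condition (\ref{eq:boundary-condition}) (in its general-$k$ form) pointwise on $\partial\Omega$; the estimate then extends to all of ${\rm Dom}(\mathscr D_0^*)\cap {\rm Dom}(\mathscr D_1)$ by approximation.

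The core computation is to expand $\|\mathscr D_0^* f\|_\varphi^2+\|\mathscr D_1 f\|_\varphi^2$ and integrate by parts. One first derives the explicit form of $\mathscr D_0^*$: since the weighted $L^2_\varphi$-adjoint of $Z_A^{B'}$ is $-Z^A_{B'}+(Z^A_{B'}\varphi)$, up to combinatorial factors arising from the symmetric-power structure, $\mathscr D_0^* f$ has the schematic form $\sum_{A,B'}(-Z^A_{B'}+Z^A_{B'}\varphi)f_{B'A\cdots}$ with appropriate symmetrisation on the primed indices. Expanding the two squared norms and integrating by parts to convert all $(Z^*)(Z)$ compositions into $(Z)(Z^*)$ compositions plus boundary contributions should produce a pointwise identity of the form
\begin{equation*}
\|\mathscr D_0^* f\|_\varphi^2 + \|\mathscr D_1 f\|_\varphi^2 = \int_\Omega \mathscr L_k(\varphi;f)e^{-\varphi}dV + \int_{\partial\Omega}\mathscr L_k(r;f)e^{-\varphi}dS + (\text{positive gradient terms}) + (\text{cross terms}),
\end{equation*}
modulo signs and constants. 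The Hessian-like quantity $\mathscr L_k(\varphi;f)$ must emerge because the only surviving curvature-type contributions after the integration by parts come from commuting complex vector fields past the weight, which produces exactly second derivatives of $\varphi$; matching the result to (\ref{eq:Levi-2}) (and its general-$k$ extension) is the place where that definition is essentially forced.

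Two structural facts then give the main contribution: strict $k$-plurisubharmonicity yields $\mathscr L_k(\varphi;f)\geq c|f|^2$ pointwise, and on $\partial\Omega$ the boundary condition characterising ${\rm Dom}(\mathscr D_0^*)$ forces $f$ into the cone on which the constraint (\ref{eq:pseudoconvex-tangent-2-0}) holds, so $k$-pseudoconvexity makes the boundary integral nonnegative and it can be discarded. The final step is absorbing the cross-terms, which are bilinear in $d\varphi$ and in first derivatives of $f$ and arise precisely because the adjoint differs from the naive one by a $(d\varphi)\cdot f$ correction; by Cauchy--Schwarz $2|ab|\leq |a|^2+|b|^2$ applied with suitable weights, the derivative portion is absorbed into the positive gradient terms already on the left, while the remainder is controlled by $4\|d\varphi\|_\infty^2\|f\|_\varphi^2$. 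The denominator $2k+6$ in $C_0$ tracks the combinatorial count of index pairs over the $(k+1)$-dimensional symmetric range together with the two remaining unprimed/primed indices. The principal technical hurdle will be the symmetric-tensor bookkeeping: the commutators between $Z_A^{B'}$ and $Z^A_{B'}$ must interact correctly with the symmetrisations $[\,\cdot\,]$ and $(\,\cdot\,)$ in (\ref{eq:anti-symmetrisation}) so that all non-Hessian, non-boundary terms cancel exactly, and so that the combinatorial coefficient emerging in front of $\|d\varphi\|_\infty^2\|f\|_\varphi^2$ is precisely $4$ (not larger) and the remaining positive denominator is precisely $2k+6$.
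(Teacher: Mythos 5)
Your overall framework --- reduce to smooth $f$ in ${\rm Dom}(\mathscr D_0^*)$ by density, expand $\|\mathscr D_0^*f\|_\varphi^2+\|\mathscr D_1 f\|_\varphi^2$, integrate by parts so that the commutators with the weight produce $\int_\Omega\mathscr L_k(\varphi;f)e^{-\varphi}dV$ and Morrey's technique plus $k$-pseudoconvexity makes the boundary integral nonnegative --- matches the paper's strategy. But there is a genuine gap at the step where you assert that, after integration by parts, ``all non-Hessian, non-boundary terms cancel exactly'' up to cross terms bilinear in $d\varphi$ and $\nabla f$ that Cauchy--Schwarz can absorb. That is what happens in the $\overline{\partial}$-Neumann problem, but not here. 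When Lemma \ref{lem:sym} (3) is used to reorganize the first sum $\Sigma_1$ (see (\ref{eq:Sigma-1}) and (\ref{eq:sigma-1'-k})), one is left with a genuinely negative \emph{full first-derivative} term
\begin{equation*}
-4\sum_{A_1',\ldots,A_k'}\bigl\|Z_{A_1'\,[0}f_{1]\,A_2'\ldots A_k'}\bigr\|_\varphi^2,
\end{equation*}
which is not a cross term in $d\varphi$, is not dominated by the remaining positive gradient terms, and has no analogue that cancels against $\|\mathscr D_1 f\|_\varphi^2$ (only the antisymmetrisation in the second sum $\Sigma_2$ reproduces $\mathscr D_1 f$; the one in $\Sigma_1$ does not). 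Your proposal as written does not account for this term, and without controlling it the estimate does not close.

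The paper's essential new idea, which your proposal is missing, is the observation (Lemma \ref{lem:raise}) that in $\mathbb{R}^4$ contraction of an unprimed index pair is the same as antisymmetrisation, so that $2Z_{A_1'\,[0}f_{1]\,A_2'\ldots A_k'}=-\sum_A Z^A_{A_1'}f_{A_2'\ldots A_k'A}$ as in (\ref{eq:Sigma-1-trace}). Splitting this contraction into its symmetric and antisymmetric parts in the primed indices identifies it, up to the combinatorics of (\ref{eq:sym-k}), with $-(\mathscr D_0^*f)-\sum_A Z^A_{(A_1'}\varphi\cdot f_{A_2'\ldots A_k')A}$ plus a multiple of $(\mathscr D_1 f)_{01A_2'\ldots A_{k-1}'}$. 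It is only at this point that the zeroth-order $d\varphi\cdot f$ correction enters, producing the $4\|d\varphi\|_\infty^2\|f\|_\varphi^2$ loss and the constants $k+4$ and $2k+6$; your heuristic that $2k+6$ ``tracks the combinatorial count of index pairs'' does not reflect this mechanism (it is $2(k-1)$ from $\Sigma_2$ plus $8$ from estimating the bad term). The paper's Remark \ref{eq:C0-kappa} (2) makes precisely this point: on $\mathbb{R}^{4n}$ with $n>1$ the offending term cannot be handled this way, so the argument is dimension-specific, whereas your proposal reads as if the standard Morrey--Kohn--H\"ormander bookkeeping would go through in any dimension. You would need to supply this contraction/antisymmetrisation step (or an equivalent device) to complete the proof.
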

 In particular, if $\varphi$ only satisfies the condition (\ref{eq:Levi0}), then  $ \kappa \varphi $ for   $0<\kappa<\frac c{ 4\|d\varphi\|_\infty^2}$  is a weight function satisfying the assumption (\ref{eq:Levi0})-(\ref{eq:C0}) in the above theorem with suitable constants (cf. Remark \ref{eq:C0-kappa} (1)). The {\it   $k$-Bergman space} with respect to   weight $\varphi $ is then defined as
 \begin{equation*}
    A^2_{(k)}(\Omega, \varphi):=\left\{f\in L_\varphi^2( \Omega,  \odot^{k
}\mathbb{C}^{2 } );{\mathscr D}_0 f=0\right\}.
 \end{equation*} It is infinite dimensional \cite{KW} because $k$-regular polynomials are in this space for bounded $\Omega$.

   \begin{thm} \label{thm:canonical} Let the domain $\Omega$ and the weight function $ \varphi$ satisfy   assumptions in the above theorem. Then
     \item[(1)]{ $\Box_\varphi$ has a bounded, self-adjoint and non-negative inverse $N_\varphi$ such that  }
       \begin{equation*}
          \|N_\varphi f\|_\varphi\leq \frac 1{  {C_0}}\|f\|_\varphi, \qquad {\rm for}  \quad {\rm  any}\quad   f\in L_\varphi^2( \Omega,   \odot^{k-1
}\mathbb{C}^{2 } \otimes \mathbb{C}^{2 } ) .
       \end{equation*}

   \item[(2)]{    $ {\mathscr D}_0^*N_\varphi f $
        is  the canonical solution operator to the nonhomogeneous     $k$-Cauchy-Fueter equation (\ref{eq:k-CF-1-eq})-(\ref{eq:compatibility}), i.e. if $f  $ is ${\mathscr D}_1$-closed, then
         $
            {\mathscr D}_0 {\mathscr D}_0^*N_\varphi f=f
      $
        and $   {\mathscr D}_0^*N_\varphi f $ is orthogonal to $A^2_{(k)}(\Omega,\varphi)$. Moreover,
        \begin{equation}\label{eq:canonical-est}
           \| {\mathscr D}_0^*N_\varphi f\|_\varphi^2+ \| {\mathscr D}_1 N_\varphi f\|_\varphi^2  \leq  \frac 1{  {C_0}}\|  f\|_\varphi^2.
        \end{equation}
        }      \end{thm}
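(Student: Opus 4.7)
The plan is the classical H\"ormander--Kohn argument: Theorem \ref{thm:L2-estimate} makes the energy form coercive, so Riesz representation (or Lax--Milgram) produces $N_\varphi$, and a Hodge-type decomposition then extracts the canonical solution of the non-homogeneous equation.

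For part (1), I would introduce the sesquilinear form
\[ Q(f,g) := (\mathscr D_0^* f,\, \mathscr D_0^* g)_\varphi + (\mathscr D_1 f,\, \mathscr D_1 g)_\varphi \]
on $V := \mathrm{Dom}(\mathscr D_0^*) \cap \mathrm{Dom}(\mathscr D_1)$. Because $\mathscr D_0^*$ and $\mathscr D_1$ are closed, $V$ equipped with the graph norm is a Hilbert space continuously embedded in $L_\varphi^2(\Omega, \odot^{k-1}\mathbb{C}^2 \otimes \mathbb{C}^2)$; Theorem \ref{thm:L2-estimate} gives $Q(f,f) \geq C_0 \|f\|_\varphi^2$, so $Q$ itself is an equivalent inner product on $V$. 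For $g \in L_\varphi^2$, the antilinear functional $f \mapsto (f,g)_\varphi$ is bounded on $(V, Q)$ with norm at most $C_0^{-1/2}\|g\|_\varphi$, and Riesz representation produces a unique $N_\varphi g \in V$ satisfying
\[ Q(N_\varphi g,\, f) = (g,\, f)_\varphi \qquad \text{for every } f \in V. \]
Plugging in $f = N_\varphi g$ and invoking the $L^2$-estimate yields $\|N_\varphi g\|_\varphi \leq C_0^{-1}\|g\|_\varphi$. Symmetry of $Q$ gives self-adjointness via $(N_\varphi g, h)_\varphi = Q(N_\varphi g, N_\varphi h) = (g, N_\varphi h)_\varphi$, while $(N_\varphi g, g)_\varphi = Q(N_\varphi g, N_\varphi g) \geq 0$ gives non-negativity. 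To identify $N_\varphi$ with $\Box_\varphi^{-1}$, I would unpack the defining identity: for every $f \in V$, $(\mathscr D_0^* N_\varphi g, \mathscr D_0^* f)_\varphi + (\mathscr D_1 N_\varphi g, \mathscr D_1 f)_\varphi = (g,f)_\varphi$, which by the definitions of $\mathrm{Dom}(\mathscr D_0)$ and $\mathrm{Dom}(\mathscr D_1^*)$ (together with the density of $C^\infty(\overline\Omega, \mathbb{C}^2 \otimes \mathbb{C}^2) \cap \mathrm{Dom}(\mathscr D_0^*)$ in $V$ from Lemma \ref{prop:domain}) places $N_\varphi g$ in $\mathrm{Dom}(\Box_\varphi)$ and yields $\Box_\varphi N_\varphi g = g$.

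For part (2), I would read the identity $\Box_\varphi N_\varphi f = f$ as the Hodge-type decomposition
\[ f = \mathscr D_0 \mathscr D_0^* N_\varphi f + \mathscr D_1^* \mathscr D_1 N_\varphi f. \]
When $\mathscr D_1 f = 0$, applying $\mathscr D_1$ to this equality and using $\mathscr D_1 \mathscr D_0 = 0$ gives $\mathscr D_1 \mathscr D_1^* (\mathscr D_1 N_\varphi f) = 0$; pairing with $\mathscr D_1 N_\varphi f$ and moving $\mathscr D_1^*$ across produces $\|\mathscr D_1^* \mathscr D_1 N_\varphi f\|_\varphi^2 = 0$, so $\mathscr D_0 \mathscr D_0^* N_\varphi f = f$. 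Orthogonality of $\mathscr D_0^* N_\varphi f$ to $A^2_{(k)}(\Omega,\varphi)$ is immediate: for $h \in A^2_{(k)}(\Omega,\varphi)$, $(\mathscr D_0^* N_\varphi f, h)_\varphi = (N_\varphi f, \mathscr D_0 h)_\varphi = 0$. The estimate \eqref{eq:canonical-est} then drops out of
\[ \|\mathscr D_0^* N_\varphi f\|_\varphi^2 + \|\mathscr D_1 N_\varphi f\|_\varphi^2 = Q(N_\varphi f, N_\varphi f) = (f, N_\varphi f)_\varphi \leq \|f\|_\varphi \|N_\varphi f\|_\varphi \leq C_0^{-1}\|f\|_\varphi^2. \]

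The main technical point, rather than a genuine obstacle, is the passage from the weak Riesz identity to the strong statement that $N_\varphi g$ lies in $\mathrm{Dom}(\Box_\varphi)$. This requires reading the boundary condition \eqref{eq:boundary-condition} and the trivial characterisation of $\mathrm{Dom}(\mathscr D_1^*)$ on smooth forms out of the weak equation, for which the density Lemma \ref{prop:domain} is essential. Once Theorem \ref{thm:L2-estimate} is in hand, everything else is soft functional analysis.
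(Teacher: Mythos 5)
Your proof is correct, and part (2) is essentially the paper's argument verbatim (the Hodge-type identity $f=\mathscr D_0\mathscr D_0^*N_\varphi f+\mathscr D_1^*\mathscr D_1 N_\varphi f$, applying $\mathscr D_1$ and using $\mathscr D_1\mathscr D_0=0$, pairing to kill $\mathscr D_1^*\mathscr D_1 N_\varphi f$, and reading the estimate off $Q(N_\varphi f,N_\varphi f)=(f,N_\varphi f)_\varphi$). For part (1), however, you take a genuinely different route. You run the Lax--Milgram/Friedrichs argument: make $Q$ a coercive equivalent inner product on $V=\mathrm{Dom}(\mathscr D_0^*)\cap\mathrm{Dom}(\mathscr D_1)$, apply Riesz representation there, and then face the weak-to-strong passage of showing $N_\varphi g\in\mathrm{Dom}(\Box_\varphi)$. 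The paper instead first records (Proposition \ref{prop:self-adjoint}, whose proof it defers to proposition 4.2.3 of \cite{CS}) that $\Box_\varphi$ is densely defined, closed and self-adjoint, derives from Theorem \ref{thm:L2-estimate} the a priori bound $C_0\|g\|_\varphi\leq\|\Box_\varphi g\|_\varphi$ on $\mathrm{Dom}(\Box_\varphi)$, concludes that the injective self-adjoint $\Box_\varphi$ has dense range, and applies Riesz representation to the antilinear functional $\Box_\varphi g\mapsto\langle f,g\rangle_\varphi$ on that range; self-adjointness then upgrades $h\in\mathrm{Dom}(\Box_\varphi^*)$ to $h\in\mathrm{Dom}(\Box_\varphi)$ with $\Box_\varphi h=f$. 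The trade-off is exactly the point you flag at the end: in your version the hard domain bookkeeping (extracting $\mathscr D_0^*N_\varphi g\in\mathrm{Dom}(\mathscr D_0)$ and $\mathscr D_1 N_\varphi g\in\mathrm{Dom}(\mathscr D_1^*)$ from the weak identity) sits inside your Riesz step and is only sketched, whereas in the paper it is packaged once and for all into the self-adjointness of $\Box_\varphi$, which is itself outsourced. Your route avoids the dense-range argument and produces the bound $\|N_\varphi g\|_\varphi\leq C_0^{-1}\|g\|_\varphi$ more directly, but to be complete it would need the same Folland--Kohn/Chen--Shaw machinery (and the density Lemma \ref{prop:domain}, as you note) that the paper invokes for Proposition \ref{prop:self-adjoint}; so neither version is more self-contained than the other.
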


     Since  the $k$-Bergman space $A^2_{(k)}(\Omega, \varphi)$ is a closed Hilbert  subspace, we have
   the orthogonal projection $P: L_\varphi^2( \Omega,  \odot^{k
}\mathbb{C}^{2 } ) \longrightarrow A^2_{(k)}(\Omega, \varphi)$,  the {\it    $k$-Bergman projection}. It follows from the above theorem that
 $
          Pf =f- {\mathscr D}_0^*N_\varphi{\mathscr D}_0f
     $
       for $f\in{\rm Dom} ({\mathscr D}_0 ) $, as in the theory of several complex variables (cf. theorem 4.4.5 in \cite{CS}).

              This framework can be applied to   the   $k$-Cauchy-Fueter complex  over  the higher dimensional space. We restrict to $4$-dimensional case because of the difficulty of obtaining the   $L^2$ estimate over $\mathbb{R}^{4n}$ for $n>1$ (cf. Remark \ref{eq:C0-kappa} (2)).
In Section 2, we give the necessary preliminaries on raising or lowering primed or unprimed indices, symmetrisation and antisymmetrisation of indices, the $k$-Cauchy-Fueter operator, the complex vector field $Z_{A}^{A'}$'s and  their formal adjoint operators, etc.. In Section 3, we derive  the  Neumann  boundary condition, introduce   notions of   $k$-plurisubharmonic functions and $k$-pseudoconvex domains, and show their properties mentioned above.
In Section 4.1, the    $L^2$ estimate
 in Theorem \ref{thm:L2-estimate} is established. In Section 4.2, we deduce the density lemma  from a general result due to H\"ormander,  and derive Theorem  \ref{thm:canonical} from  the    $L^2$ estimate
 in Theorem \ref{thm:L2-estimate}.

 I would like to thank the referee for many valuable suggestions.

\section{Preliminary}

\subsection{Symmetrisation  and antisymmetrisation}
Recall that the
{\it symmetric power} $ \odot^{p
}\mathbb{C}^{2 } $ is a subspace of $ \otimes^{p
}\mathbb{C}^{2 } $, and an element of $\odot^{p} \mathbb{C}^2 $ is given by a $2^p$-tuple  $
(  f_{A_1' \ldots A_p'})\in \otimes^{p} \mathbb{C}^2$ with $A_1' \ldots A_p'   =0',1' $  such that
$  f_{A_1' \ldots A_p'} $ is invariant under   permutations of
subscripts, i.e.
\begin{equation*}
     f_{A_1' \ldots A_p'} =  f_{   A_{\sigma(1)}'\ldots A_{\sigma(p)} '}
 \end{equation*} for any $\sigma\in S_p$, the group of permutations of $p$ letters.
We will use  symmetrisation    of indices
\begin{equation}\label{eq:sym-0}
     f_{\cdots (A_1'\ldots A_p')\cdots}: =\frac 1 {p!}\sum_{\sigma\in S_p}  f_{\cdots  A_{\sigma(1)}'\ldots A_{\sigma(p)}' \cdots}.
     \end{equation}In particular, if $(f_{A_1'\ldots A_p'})\in \otimes^p \mathbb{C}^{2 }$ is symmetric in $A_2'\ldots A_p'$, then we have
 \begin{equation}\label{eq:sym-1}
   f_{(A_1'\ldots A_p')}=\frac 1k \left(f_{A_1'A_2'\ldots A_p'}+\cdots+ f_{A_s'A_1'\ldots \widehat{A_s'}\ldots A_p'}+\cdots+ f_{A_p'A_1'\ldots A_{p-1}' } \right).
 \end{equation}
  An element of $f \in L_\varphi^2(\Omega,   \odot^{k-2
}\mathbb{C}^{2 } \otimes \Lambda^2\mathbb{C}^{2 }  )$ is given by a $2^{k }$-tuple $ (  f_{A_3' \ldots A_k'AB})\in L_\varphi^2(\Omega,   \otimes^{k } \mathbb{C}^2)$   such that
they are invariant under   permutations  of primed indices and $f_{\cdots AB}=-f_{\cdots  B A}$.
 For any $f, g\in L_\varphi^2(\Omega,   \odot^{k-2
}\mathbb{C}^{2 } \otimes  \Lambda^2\mathbb{C}^{2 }  )$, define
 \begin{equation}\label{eq:inn-product2}
  \left \langle f, g\right\rangle_{\varphi }=\sum_{  A,B=0,1} \sum_{  A_3', \ldots,A_k'=0',1'}\left(  f_{ A_3'\ldots A_k'AB } ,  g_{ A_3'\ldots A_k'AB }\right)_{\varphi },
 \end{equation} and $\|f\|_\varphi= \left \langle f,  {f}\right\rangle_{\varphi }^{\frac 12}$.
Similarly,  we define the weighted inner products of $ L_\varphi^2(\Omega,  \odot^{k
}\mathbb{C}^{2 } )$ and $ L_\varphi^2(\Omega,\odot^{k-1
}\mathbb{C}^{2 }\otimes \mathbb{C}^{2 } )$ as subspaces of $ L_\varphi^2(\Omega,  \otimes^{k
}\mathbb{C}^{2 } )$. $|\xi|$ for $\xi\in\odot^{k-1
}\mathbb{C}^{2 } \otimes  \mathbb{C}^{2 } $ is defined in the same way.

We use
 \begin{equation}  (\varepsilon_{A'B'})=\left( \begin{array}{cc} 0&
 1\\-1& 0\end{array}\right) \qquad {\rm and } \qquad \left (\varepsilon^{A'B'}\right) =\left( \begin{array}{cc} 0&
- 1\\1& 0\end{array}\right)  \label{eq:varepsilon}
 \end{equation} to raise or lower primed  indices,  where $(\varepsilon^{A'B'})$ is the inverse of $(\varepsilon_{A'B'})$. For example,
\begin{equation*}
    f_{\ldots\phantom{A'}\ldots}^{\phantom{\ldots}A'}=  \sum_{B'=0',1'}   f_{\ldots{B'}\ldots}\varepsilon^{B'A'},\qquad \sum_{A'=0',1'}   f_{\ldots\phantom{A'}\ldots}^{\phantom{\ldots}A'}\varepsilon_{A'C'}=  f_{\ldots{C'}\ldots}.
\end{equation*}Since
$
 \sum_{B'=0',1'}  \varepsilon_{A'B'}\varepsilon^{B'C'}=\delta_{A'}^{ C'}= \sum_{B'=0',1'} \varepsilon^{C'B'}\varepsilon_{B'A'}
$,  it is the same when an index is raised (or lowered) and then lowered  (or raised).
Similarly we use
 \begin{equation}  (\epsilon_{A B })=\left( \begin{array}{cc} 0&
 1\\-1& 0\end{array}\right),\qquad  (\epsilon^{A B }) =\left( \begin{array}{cc} 0&
- 1\\1& 0\end{array}\right)  \label{eq:epsilon}
 \end{equation}
  to raise or lower unprimed indices. We have \begin{equation}\label{eq:Z_A^A'}
  \left ( Z_{A}^{A'}\right) =\left(
                             \begin{array}{rr}
                               Z_{01'  } &  -{Z}_{00' } \\
                               Z_{11'  } &  -{Z}_{10' } \\
                                                            \end{array} \right),
  \qquad  \qquad
   \left( Z^{A}_{A'}\right) =\left(
                             \begin{array}{rr}
                               Z_{ 10'  } &  -{Z}_{00' } \\
                              Z_{11'  } &-{Z}_{ 01' } \\
                                                            \end{array}
                           \right).
 \end{equation}

The following properties of symmetrisation and antisymmetrisation of indices are frequently used later.
\begin{lem}\label{lem:sym} {\rm (cf. lemma 2.1 in \cite{Wa-weighted})} \item[(1)]{For any   $  g, G\in      \otimes^p \mathbb{C}^{2 }    $, we have
   \begin{equation}\label{eq:bracket-omit}
  \sum_{B_1', \ldots, B_p'=0',1'}     g_{ (B_1' \ldots B_p')} \overline{G_{ (B_1' \ldots B_p')}} =\sum_{ B_1', \ldots, B_p'=0',1'
   }     g_{ (B_1' \ldots B_p')} \overline{ G_{  B_1' \ldots B_p' }} .
\end{equation}}

\item[(2)]{For any $(h_{A
  B
  })\in \Lambda^2 \mathbb{C}^2$ and $(H_{A B })\in \otimes^2 \mathbb{C}^2$,
    \begin{equation}\label{eq:antisym1}\sum_{A ,B=0,1 } h_{ A B} \overline{H_{ A B}}=
    \sum_{A ,B=0,1    } h_{A B} \overline{H_{ [A B ]}}.
   \end{equation}}

\item[(3)]{  For any $(h_{A B }), (H_{A B })\in \otimes^2 \mathbb{C}^2$,
   \begin{equation}\label{eq:antisym}
     \sum_{A ,B =0,1   }  h_{B A } \overline{H_{ A  B  }}=  \sum_{A ,B=0,1    }   h_{A B } \overline{H_{ A  B   }}-2  \sum_{A ,B   } h_{[A B] } \overline{H_{ [ A  B]    }}.
   \end{equation}}
\end{lem}

\begin{lem}\label{lem:raise}
   \begin{equation}\label{eq:raise} \begin{split}
     \sum_{A=0,1} f_{\ldots {\phantom A} \ldots A \ldots}^{\phantom {\ldots  }A} =-f_{\ldots 0\ldots 1\ldots} +f_{\ldots 1\ldots 0\ldots} ,\qquad
   \sum_{A'=0',1'}   f_{\ldots\phantom {A'} \ldots A'\ldots}^{\phantom {\ldots   }A'} =-f_{\ldots 0'\ldots 1'\ldots} +f_{\ldots 1'\ldots 0'\ldots}.
  \end{split}  \end{equation}
\end{lem}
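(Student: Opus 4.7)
The plan is to prove both identities by a single direct unpacking of the definition of index-raising introduced in (\ref{eq:epsilon}) and its primed analog (\ref{eq:varepsilon}). Since the two statements differ only by the replacement of $\epsilon$ with $\varepsilon$ and of unprimed with primed indices, I would only carry out the unprimed case in detail and note that the primed case is identical.

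First, I recall that by definition the raised-index tensor is given by contraction with $\epsilon$: namely, if we write $f_{\ldots B \ldots A \ldots}$ for the original tensor with two distinguished slots, then
\begin{equation*}
f_{\ldots\phantom{A}\ldots A \ldots}^{\phantom{\ldots}A} \;=\; \sum_{B=0,1} f_{\ldots B \ldots A \ldots}\,\epsilon^{BA}.
\end{equation*}
Summing over $A$ and exchanging the order of summation,
\begin{equation*}
\sum_{A=0,1} f_{\ldots\phantom{A}\ldots A \ldots}^{\phantom{\ldots}A} \;=\; \sum_{A,B=0,1} f_{\ldots B \ldots A \ldots}\,\epsilon^{BA}.
\end{equation*}

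Next, I would substitute the explicit values $\epsilon^{01}=-1$, $\epsilon^{10}=1$, $\epsilon^{00}=\epsilon^{11}=0$ from (\ref{eq:epsilon}). Only the two off-diagonal terms survive, which gives exactly
\begin{equation*}
f_{\ldots 0 \ldots 1\ldots}\,\epsilon^{01}+f_{\ldots 1 \ldots 0\ldots}\,\epsilon^{10} \;=\; -f_{\ldots 0\ldots 1\ldots}+f_{\ldots 1\ldots 0\ldots},
\end{equation*}
as claimed. The primed case follows verbatim using $\varepsilon^{0'1'}=-1$, $\varepsilon^{1'0'}=1$.

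There is no real obstacle here beyond careful bookkeeping of the slot in which the raised summation index sits versus the slot in which the lowered one sits; the content of the lemma is essentially that raising an index in the two-component formalism combined with a contraction over the same index simply antisymmetrises (up to a factor) the pair of slots. This explains why the lemma is stated as a standalone fact: it will be invoked repeatedly when simplifying expressions involving $Z^{A'}_A$ and $Z_{A'}^A$ (e.g.\ in applying $\mathscr D_0$ and $\mathscr D_1$ and their adjoints), and having the antisymmetric rewriting available in closed form avoids repeating the same index manipulation in each computation.
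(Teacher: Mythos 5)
Your proposal is correct and coincides with the paper's own proof: both unpack the raising convention as $\sum_{A}f_{\ldots\phantom{A}\ldots A\ldots}^{\phantom{\ldots}A}=\sum_{A,B}f_{\ldots B\ldots A\ldots}\epsilon^{BA}$ and then read off the result from $\epsilon^{10}=-\epsilon^{01}=1$, $\epsilon^{00}=\epsilon^{11}=0$ (and likewise for $\varepsilon$ in the primed case). Nothing is missing.
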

This lemma means that   contraction of indices is just   antisymmetrisation:
 \begin{equation*}
    f_{  {\phantom A}   A  }^{ A}=-2f_{[01]},\qquad  f_{\phantom {A'}   A'  }^{ A'}=-2f_{[0'1']},
 \end{equation*}
  which is very important to establish our $L^2$ estimate and only holds   in dimension $4$ (cf. Remark \ref{eq:C0-kappa}).

\subsection{The  formal adjoint operator and Stokes' formula}
  One advantage of
  raising   indices is that the  formal adjoint operator   of $ Z_{A}^{A'}$  can  be written in a very simple form.
For a fixed weight $\varphi$, we introduce   differential operators
\begin{equation*}
   \delta_{A'}^A a:=  Z^{ A}_{A'} a-Z^{ A}_{A'}\varphi\cdot a
\end{equation*}
for a scalar function $a$.

\begin{prop}\label{prop:formal-adjoint} (1) The formal adjoint operator
 of $Z_{A}^{A' }$ with respect to the weighted inner product (\ref{eq:inner-product}) is
    $ \delta_{A'}^A$£¬ i.e. for any $a,b\in C_0^1(\Omega,\mathbb{C})$ we have
\begin{equation}\label{eq:Z_AA'2}
 \left (Z_{A}^{A'} a,b\right)_\varphi=\left( a,\delta_{A'}^Ab\right)_\varphi.
  \end{equation}

(2) We have    \begin{equation}\label{eq:Z-raise}
      \overline{Z_{A}^{A'}}=-Z^{A}_{A'} .
    \end{equation}
\end{prop}

(2) can be checked directly by using definition.
For a defining function $r$ of the domain $\Omega$ with $|dr|=1$ on the boundary and a complex vector field $Z$, we have Stokes' formula
\begin{equation*}\label{eq:Stokes0}
   \int_\Omega Za\cdot \overline{b} e^{- \varphi}dV+\int_\Omega a\cdot  {Z}\overline{b}\cdot e^{- \varphi} dV - \int_{\Omega} a\overline{b} \cdot Z\varphi \cdot e^{- \varphi} dV= \int_{\partial\Omega} Zr\cdot a  \overline{b} e^{- \varphi}dS
\end{equation*}for any $a,b\in C^1(\overline{\Omega},\mathbb{C})$, where $dS$ is the surface measure of the boundary, i.e.
\begin{equation}\label{eq:Stokes}
   (Z a, b)_\varphi = \left ( a, Z^*_\varphi b\right)_\varphi  +\int_{\partial\Omega} Zr\cdot a  \overline{b} e^{- \varphi}dS,
\end{equation} where $
      Z_\varphi^*=-\overline{Z }+ \overline{Z  }\varphi
$.
In particular, we have
\begin{equation}\label{eq:Stokes-Z}
  \left (Z_{A}^{A'} a, {b}\right)_\varphi =  \left( a, \delta_{A'}^A {b}\right)_\varphi+\mathscr B,\qquad  \mathscr B = -\int_{\partial\Omega}    a\cdot \overline{Z^A_{A'}r\cdot b} e^{- \varphi}dS,
\end{equation}
by using  (\ref{eq:Z-raise}),  and by taking conjugate,
\begin{equation}\label{eq:Stokes-delta}
  \left (\delta_{A'}^{A} a, b\right)_\varphi = \left( a, Z_A^{A'} b\right)_\varphi+\mathscr B',\qquad  \mathscr B' =- \int_{\partial\Omega}     a\cdot \overline{Z^{A'}_Ar\cdot b} e^{- \varphi}dS.
\end{equation}

  We have ${\mathscr D}_1\circ {\mathscr D}_0 =0$   (cf. (2.11) in \cite{CMW}) because ${\mathscr D}_0$ and ${\mathscr D}_1$ as differential operators are both densely defined and closed,  and for any $u\in C^2(\Omega,  \odot^k \mathbb{C}^{2 }   )$,
\begin{equation}\label{eq:exact}\begin{split}
    (\mathscr  D_1\mathscr D_0u)_{ A_3'\ldots A_{k  }' AB}  =\frac 12\sum_{A',C'=0',1'}\left( Z^{A'}_{
    A}Z^{C'}_Bu_{C'A'A_3'\ldots A_{k  }' }-Z^{A'}_{B}Z^{C'}_Au_{C'A'A_3'\ldots A_{k  }'  }\right)=0
\end{split}\end{equation}
by relabeling  indices, $ u_{C'A'A_3'\ldots A_{k  }' }=u_{A'C'A_3'\ldots A_{k  }' }$ and   the commutativity $Z^{A'}_{ B}Z^{C'}_A=Z^{C'}_AZ^{A'}_{B}$, which holds for  scalar differential operators of
constant complex coefficients.

We have   isomorphisms
 \begin{equation}\label{eq:isomorphisms}
 \odot^{k }\mathbb{C}^2 \cong \mathbb{C}^{k+1 } , \qquad  \odot^{k-1 }\mathbb{C}^2\otimes  \mathbb{C}^2  \cong \mathbb{C}^{2k},\qquad \odot^{k-2 }\mathbb{C}^2\otimes \Lambda^2\mathbb{C}^2\cong
 \mathbb{C}^{k-1}
 \end{equation}
by identification $u\in\odot^{k }\mathbb{C}^2  $,  $  f\in \odot^{k-1 }\mathbb{C}^2\otimes  \mathbb{C}^2   $, $ F\in  \odot^{k-2 }\mathbb{C}^2\otimes \Lambda^2\mathbb{C}^2$ with\begin{equation}\label{eq:Dk}
     \left(\begin{array}{c}u_{0' \ldots 0'0'}\\u_{1' \ldots 0'0'}
\\ \vdots\\u_{1' \ldots 1' 0'}\\u_{1' \ldots 1'1'}
\end{array}
\right) ,\qquad   \left(\begin{array}{c}f_{ 0'\ldots  0'0}\\ f_{ 0'\ldots  0'1}
\\ \vdots\\ f_{1'\ldots  1' 0}\\ f_{1'\ldots  1' 1 }\end{array}\right),\qquad  \left(\begin{array}{c}F_{ 0'\ldots  0'01}\\ F_{ 1'\ldots  0'01}
\\ \vdots\\ F_{ 1'\ldots  1'0 1}\end{array}\right) ,
\end{equation}   respectively. Then the  $k$-Cauchy-Fueter complex   becomes
\begin{equation*}\begin{split}
0\rightarrow C^\infty \left(\Omega,  \mathbb{C}^{k+1 }  \right) &
\xrightarrow{{\mathscr D}_0 }C^\infty \left(\Omega,       \mathbb{C}^{2k  }  \right)
\xrightarrow{{\mathscr D}_1 } C^\infty \left(\Omega,    \mathbb{C}^{k-1 } \right)  \rightarrow
0.\end{split}
\end{equation*}See also \cite{CMW} for the matrix form of the Neumann problem (\ref{eq:Neumann}) for general $k$. But we use a different norm of $\odot^{k }\mathbb{C}^2$ there.

\section{The Neumann  boundary condition,  $k$-plurisubharmonicity  and $k$-pseudoconvexity}
 \subsection{The Neumann  boundary condition}

\begin{prop}\label{prop:Neumann-k}
(1) For $f\in  C_0^1(\Omega, \odot^{k-1}
 \mathbb{C}^{2 }\otimes \mathbb{C}^{2 } )$, $F\in  C_0^1(\Omega, \odot^{k-2}
 \mathbb{C}^{2 }\otimes\Lambda^2\mathbb{C}^{2 } )$, we have
   \begin{equation}\label{eq:T*}\begin{split}&
   ({\mathscr D}_0^*f)_{A_1 '\ldots A_k'}=\sum_{A =0,1} \delta_{(A_1 ' }^{ A} f_{ A_2 '\ldots A_k' ) A } , \qquad  ({\mathscr D}_1^*F)_{A_2 '\ldots A_k'A }=\sum_{A =0,1} \delta_{ (A_2  ' }^{ B} F_{A_3 '\ldots A_k') B A   }.
  \end{split} \end{equation}

(2)  $f\in C^1(\overline{\Omega}, \odot^{k-1} \mathbb{C}^{2 } \otimes \mathbb{C}^{2 }
)\cap{\rm Dom}( {\mathscr D}_0^*)$ if and only if
 \begin{equation}\label{eq:Neumann-0'}
   \sum_{A =0,1} Z_{(A_1 ' }^Ar\cdot f_{A_2 '\ldots A_k')A}= 0 \qquad {\rm on } \quad\partial\Omega
 \end{equation}for any $ A_1 ,\ldots ,A_k'=0',1'$; and
   $F\in C^1(\overline{\Omega},\odot^{k-2} \mathbb{C}^{2 }\otimes \Lambda^2 \mathbb{C}^{2 }
)\cap  {\rm Dom} ({\mathscr D}_1^*)$ if and only if
 \begin{equation}\label{eq:Neumann-1'}
  \sum_{B=0,1 } Z_{ (A_2'}^Br\cdot F_{A_3 '\ldots A_k') B A  }=0 ,\qquad{\rm on } \quad\partial\Omega
 \end{equation}for any $ A_2 ,\ldots ,A_k'=0',1'$, $A=0,1$ \end{prop}
\begin{proof} (1) For any $u \in C_0^\infty (\Omega,  \odot^k \mathbb{C}^{2 }   )$,
   \begin{equation*}\begin{split}
    \langle{\mathscr D}_0 u, f\rangle_{\varphi}& =\sum_{A, A_1 ',\ldots, A_k'} ( Z^{A_1 ' }_{ A}  u_{ A_1'A_2 '\ldots A_k' } ,  { f_{ A_2 '\ldots A_k' A}} )_{ \varphi }
         =\sum_{  A_1 ',\ldots, A_k'}\left(  u_{ A_1'A_2 '\ldots A_k' } ,\sum_{A } \delta_{(A_1 ' }^{ A} { f_{A_2 '\ldots A_k') A }}\right)_{ \varphi }=\langle u,{\mathscr D}_0^*f\rangle_{\varphi}
 \end{split}  \end{equation*} by  $\delta_{A  ' }^{ A}$ as the formal adjoint operator of $Z^{A ' }_{ A}$ in (\ref{eq:Z_AA'2}).  Here we need to symmetrise primed indices in $\delta_{ A_1 ' }^{ A} { f_{A_2 '\ldots A_k'  A }} $ by using      Lemma \ref{lem:sym} (1), since  only after symmetrisation it becomes a  $\odot^{k
}\mathbb{C}^{2 }$-valued function.

(2)  If $f\in C^1(\overline{\Omega}, \odot^{k-1} \mathbb{C}^{2 }\otimes  \mathbb{C}^2
)\cap  {\rm Dom} ({\mathscr D}_0^*) $, then for any $u\in C^2(\overline{\Omega}, \odot^k
 \mathbb{C}^{2 })$ we have
 \begin{equation*}\begin{split}
 \left \langle {\mathscr D}_0  u ,   f \right\rangle_\varphi  &= \sum_{A, A_1 ',\ldots, A_k'}\left(Z^{ A_1'}_A u_{A_1 '\ldots A_k'}  , f_{A_2 '\ldots A_k' A}\right)_\varphi =
  \sum_{A_1 ',\ldots, A_k'}\left(u_{A_1 '\ldots A_k'} , \sum_A \delta_{(A_1'}^A f_{A_2 '\ldots A_k' )A}\right)_\varphi + \mathscr  B_0 \end{split}  \end{equation*}
 by applying  Stokes' formula
 (\ref{eq:Stokes-Z}) and using  symmetrisation  by   Lemma \ref{lem:sym} (1), with the boundary term
 \begin{equation*}\label{eq:boundary-term0}\begin{split}
 \mathscr  B_0:&= -\sum_{A, A_1 ',\ldots }\int_{\partial\Omega} u_{A_1 '\ldots A_k'}  \overline{Z_{ A_1'}^Ar\cdot  f_{A_2 '\ldots A_k' A}} e^{- \varphi }dS = -\int_{\partial\Omega} \sum_{  A_1 ',\ldots }u_{A_1 '\ldots A_k'} \cdot \overline{\sum_{A  }Z_{ (A_1'}^Ar\cdot  f_{A_2 '\ldots A_k') A}} e^{- \varphi }dS,
  \end{split}\end{equation*}by using  symmetrisation  by Lemma \ref{lem:sym} (1) again.
  Thus
$\langle\mathscr D_0u,f\rangle_\varphi =
  \langle u,\mathscr D_0^*f\rangle_\varphi
$
 if and only if the boundary term $\mathscr  B_0$
  vanishes for any $\odot^{k
}\mathbb{C}^{2 }$-valued function $u$, i.e.  (\ref{eq:Neumann-0'}) holds.

Now for $h\in C^1(\overline{\Omega}, \odot^{k-1} \mathbb{C}^{2 } \otimes  \mathbb{C}^{2 }
)$,  we have
    \begin{equation*}\begin{split}\langle {\mathscr D}_1h,F\rangle_\varphi&=\sum_{B,A,A_2',\ldots, A_k'}\left ( Z^{ A_2'}_{[B} h_{A]A_2 '\ldots A_k'},      F_{ A_3 '\ldots A_k'B A}\right)_\varphi=\sum_{B,A,A_2',\ldots, A_k'}\left ( Z^{ A_2'}_B h_{AA_2 '\ldots A_k'},      f_{ A_3 '\ldots A_k'B A}\right)_\varphi  \\ &=
 \sum_{A,A_2',\ldots, A_k'}\left (  h_{A_2 '\ldots A_k'A },  \sum_B \delta_{ (A_2'}^B F_{A_3 '\ldots A_k') B A }\right)_\varphi
 +\mathscr  B_0'
 \end{split}  \end{equation*}by dropping  antisymmetrisation by (\ref{eq:antisym1}), applying  Stokes' formula
 (\ref{eq:Stokes-Z}) as above and using symmetrisation. The  formal adjoint operator ${\mathscr D}_1^*$ has the expression (\ref{eq:T*})   if we choose $h$ compactly supported.   Then $ \langle {\mathscr D}_1h,F\rangle_\varphi =\left \langle h  ,  {\mathscr D}_1^*F \right\rangle_\varphi$
if and only if the boundary term
\begin{equation*}\begin{split}
   \mathscr  B_0'&  =- \int_{\partial\Omega} \sum_{ A,A_2',\ldots }  h_{A_2 '\ldots A_k'A} \overline{\sum_{B  } Z_{( A_2'}^Br\cdot F_{A_3 '\ldots A_k' )B A  }} e^{- \varphi } dS=0
 \end{split}\end{equation*}
 for any $h$,   i.e. (\ref{eq:Neumann-1'}) holds.
   \end{proof}

\subsection{ $k$-plurisubharmonicity and $k$-pseudoconvexity}
\begin{prop}
  Suppose that $\chi$ is an increasing smooth convex function over $[0,\infty)$. Then for $\psi(x)=\chi(\varphi(x))$, we have
  \begin{equation}\label{eq:}
   \mathscr L_k(\psi;\xi)  \geq  \chi '(\varphi )  \mathscr L_k( \varphi;\xi)
  \end{equation}for
any $ \xi \in\odot^{k-1}\mathbb{C}^{2 }\otimes \mathbb{C}^{2 }$. In particular $\psi$ is $k$-plurisubharmonic if $\varphi$ is.
  \end{prop}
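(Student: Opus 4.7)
The plan is to mimic the $k=2$ computation done earlier in the paper (the proof of inequality \eqref{eq:L2-kappa}) but carry the symmetrization over all $k$ primed indices. The chain rule for $\psi = \chi(\varphi)$ gives, after one differentiation,
\[
Z_{A_1'}^{A}\psi = \chi'(\varphi)\, Z_{A_1'}^{A}\varphi,
\]
and after a second,
\[
Z_{B}^{A_1'}Z_{A_1'}^{A}\psi = \chi'(\varphi)\, Z_{B}^{A_1'}Z_{A_1'}^{A}\varphi + \chi''(\varphi)\, Z_{B}^{A_1'}\varphi\cdot Z_{A_1'}^{A}\varphi.
\]
The first step is to insert this expansion into the definition \eqref{eq:Levi} of $\mathscr L_k(\psi;\xi)$. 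The $\chi'$-term immediately reproduces $\chi'(\varphi)\mathscr L_k(\varphi;\xi)$, so the whole task reduces to proving that the remaining $\chi''$-term is nonnegative.

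The remaining term is
\[
X := -k\,\chi''(\varphi)\sum_{A,B,A_1',\ldots,A_k'} Z_{B}^{A_1'}\varphi\cdot Z_{(A_1'}^{A}\varphi\cdot \xi_{A_2'\ldots A_k')A}\,\overline{\xi_{A_2'\ldots A_k' B}}.
\]
Next I would use the identity $Z_{B}^{A_1'}\varphi = -\overline{Z_{A_1'}^{B}\varphi}$, which follows from $\overline{Z_A^{A'}} = -Z_{A'}^A$ in Proposition \ref{prop:formal-adjoint}(2) together with $\varphi$ being real. Introducing the auxiliary quantity
\[
Q_{A_1'A_2'\ldots A_k'} := \sum_{A} Z_{A_1'}^{A}\varphi\cdot \xi_{A_2'\ldots A_k' A},
\]
the term $X$ becomes
\[
X = k\,\chi''(\varphi) \sum_{A_1',\ldots,A_k'} Q_{(A_1'A_2'\ldots A_k')}\,\overline{Q_{A_1'A_2'\ldots A_k'}}.
\]

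The key step is then to apply Proposition \ref{prop:sym} to convert the bare tensor $\overline{Q_{A_1'\ldots A_k'}}$ on the right into its fully symmetrized form $\overline{Q_{(A_1'\ldots A_k')}}$; this is allowed precisely because the other factor $Q_{(A_1'\ldots A_k')}$ is already symmetric in $(A_1'\ldots A_k')$. One then obtains
\[
X = k\,\chi''(\varphi)\sum_{A_1',\ldots,A_k'} \bigl|Q_{(A_1'A_2'\ldots A_k')}\bigr|^{2} \geq 0,
\]
since $\chi''\ge 0$ by convexity of $\chi$. Adding this to the $\chi'$-piece yields the desired inequality $\mathscr L_k(\psi;\xi)\geq \chi'(\varphi)\mathscr L_k(\varphi;\xi)$. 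The strict/non-strict $k$-plurisubharmonicity statement is then immediate from $\chi'\ge 0$.

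The only delicate point is bookkeeping: one must symmetrize over all $k$ primed indices (not just one pair, as in the $k=2$ case) and check that the $\chi''$-contribution really assembles into a nonnegative sum of moduli squared. This is handled cleanly by Proposition \ref{prop:sym}, so the main obstacle is purely notational rather than conceptual.
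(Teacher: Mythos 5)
Your proposal is correct and follows essentially the same route as the paper's own proof: expand the second derivative of $\chi(\varphi)$ by the chain rule, identify the $\chi'$-piece with $\chi'(\varphi)\mathscr L_k(\varphi;\xi)$, rewrite the $\chi''$-piece via $Z_B^{A_1'}\varphi=-\overline{Z_{A_1'}^B\varphi}$, and use Proposition \ref{prop:sym} to symmetrise the unsymmetrised factor so that the remainder becomes $\chi''(\varphi)$ times a sum of squared moduli. The auxiliary notation $Q_{A_1'\ldots A_k'}$ is a purely cosmetic repackaging of the same computation.
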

\begin{proof} For
any $(\xi_{ A_2 '\ldots A_k' A})\in\odot^{k-1}\mathbb{C}^{2 }\otimes \mathbb{C}^{2 }$, it is easy to see that
      \begin{equation*}\begin{split}\mathscr L_k(\psi;\xi)  =&-\chi'(\varphi(x))\sum_{A,B, A_1', \ldots,A_k'}    Z_{B}^{A_1'}Z_{ (A_1 ' }^{ A}\varphi\cdot \xi_{ A_2'\ldots  A_k') A}
  \overline{ \xi_{ A_2'\ldots A_k' B } }\\&- \chi''(\varphi(x)) \sum_{A,B, A_1', \ldots,A_k'}     Z_{B}^{A_1'}\varphi\cdot Z_{ (A_1 ' }^{ A} \varphi\cdot \xi_{ A_2'\ldots  A_k') A}
  \overline{ \xi_{ A_2'\ldots A_k' B } } \\=&\chi '(\varphi(x))  \mathscr L_k( \varphi;\xi)+  \chi''(\varphi(x)) \sum_{ A_1', \ldots,A_k'}  \left| \sum_B  { Z^{B}_{(A_1'}\varphi \cdot \xi_{ A_2'\ldots A_k') B } }  \right|^2
      \end{split}\end{equation*} by $-Z_{B}^{A '}\varphi=\overline{Z^{B}_{A '}\varphi}$ for real $\varphi$   by (\ref{eq:Z-raise}) and using  symmetrisation by Lemma \ref{lem:sym} (1).
    The result follows.
\end{proof}
  For a  $k$-pseudoconvex domain, we can choose a  defining function satisfying $|dr|=1$ on the boundary by the following proposition.
\begin{prop} \label{prop:k-pseudoconvexity}
  $k$-pseudoconvexity  of a domain is independent of the choice of  defining functions.
  \end{prop}
\begin{proof} Suppose that $r$ and $\widetilde{r}$ are both  defining functions of the domain $\Omega$. Then $\widetilde{r}(x)=  \mu(x)r(x)$ for some nonvanishing function $\mu>0$ near $\partial\Omega$. Note that $Z_{ A'}^A\widetilde{r}=\mu\cdot Z_{ A'}^A {r}$ on the boundary. It is obvious that for any $x\in \partial\Omega$ and $A_1'\ldots  A_k'=0',1'$, $ \xi \in\odot^{k-1}\mathbb{C}^{2 }\otimes \mathbb{C}^{2 }$ satisfies
$
   \sum_{A=0,1 } Z_{(A_1'}^A\widetilde{r}(x)\cdot \xi_{A_2'\ldots  A_k')  A} =0,
$ if and only if it satisfies
$
   \sum_{A=0,1 } Z_{(A_1'}^A {r}(x)\cdot \xi_{A_2'\ldots  A_k')  A}=0.
$   So the boundary condition (\ref{eq:pseudoconvex-tangent}) for $\xi$ is independent of the choice of   defining functions.
Then for $x\in \partial\Omega$ and $\xi $ satisfying  the boundary  condition (\ref{eq:pseudoconvex-tangent}), we have
 \begin{equation*}\begin{split} \mathscr L_k(\widetilde{r};\xi)(x)=& - \sum_{A,B, A_1', \ldots,A_k'} \sum_{s =1}^k    Z_{B}^{A_1'}Z_{  A_s ' }^{ A} (\mu(x) r(x))\cdot \xi_{  A_1'\ldots \widehat{A_s'}\ldots  A_k'  A}
  \overline{ \xi_{ A_2'\ldots A_k' B } }\\=& \mu(x)\mathscr L( {r};\xi)(x)+
r(x) \mathscr L(\mu;\xi)+\Sigma' +\Sigma''  \end{split}\end{equation*}(the second term    vanishes on the boundary) by  (\ref{eq:sym-1})
with\begin{equation*}\begin{split}\Sigma'& =-   k \sum_{ B, A_1', \ldots,A_k'}    Z_{B}^{A_1'}\mu(x)\cdot
  \overline{ \xi_{ A_2'\ldots A_k' B } } \cdot \sum_{A}{Z_{  (A_1 ' }^{ A} }  r(x)\xi_{   A_2'\ldots   A_k')  A}=0,
    \end{split}\end{equation*} by  the  condition (\ref{eq:pseudoconvex-tangent}) for $\xi$  on the boundary,
and by $Z^{A ' }_{B}=-\overline{Z_{A ' }^{B}}$
    \begin{equation*}\begin{split}  \Sigma'' & =k\sum_{  A_1', \ldots,A_k'}\overline{\sum_{ B }  Z^{B}_{A_1'} r(x) \xi_{ A_2'\ldots A_k' B } } \cdot  \sum_{ A } {Z_{ ( A_1 ' }^{ A} }\mu (x) \cdot
  \xi_{ A_2'\ldots    A_k')  A}\\&=k\sum_{  A_1', \ldots,A_k'} \overline{\sum_{B}  Z^{B}_{(A_1'} r(x) \xi_{ A_2'\ldots A_k') B } } \cdot  \sum_{A} {Z_{ ( A_1 ' }^{ A} }\mu (x) \cdot
  \xi_{ A_2'\ldots    A_k')  A}  =0,
    \end{split}\end{equation*}for $\xi $ satisfying  the boundary  condition (\ref{eq:pseudoconvex-tangent}), by using symmetrisation by Lemma \ref{lem:sym} (1).
       At last we get
    $
        \mathscr L_k(\widetilde{r};\xi)=\mu \mathscr L_k( {r};\xi)
   $ on the boundary for $\xi $ satisfying  the   condition (\ref{eq:pseudoconvex-tangent}). The result follows.
\end{proof}

 $k$-plurisubharmonic functions and $k$-pseudoconvex domains are abundant by the following examples.

\begin{prop}\label{prop:abundant} $
  r_1(x)=x_1^2+x_2^2$ and $r_2(x)=x_3^2+x_4^2
$
are both strictly $k$-plurisubharmonic,  and   $\chi_1 (r_1)$, $\chi_2 (r_2)$  and their sum are all (strictly) $k$-plurisubharmonic  for any increasing smooth (strictly) convex functions $\chi_1 $ and  $\chi_2 $ over $[0,\infty)$.
\end{prop}\begin{proof}
Note that
   \begin{equation*}
      \mathscr L_k( {r};\xi)(x)=  \sum_{A,B, A_1', \ldots,A_k'}      Z_{B}^{A_1'}\overline{Z^{  A_1 ' }_{ A}   r }\cdot \xi_{ A_2'\ldots A_k'  A}
  \overline{ \xi_{ A_2'\ldots A_k' B } }+\sum_{s =2}^k \sum_{A,B, A_1', \ldots,A_k'} Z_{B}^{A_1'}\overline{Z^{  A_s ' }_{ A}   r }\cdot \xi_{ A_1'\ldots \widehat{A_s'}\ldots  A_k'  A}
  \overline{ \xi_{ A_2'\ldots A_k' B } },
   \end{equation*} and
 \begin{equation}\label{eq:k-CF-raised} ( Z_{A}^{A'})=  \left(
                                      \begin{array}{rr}- \partial_{x_3} -\textbf{i}\partial_{x_4}
                                    & -\partial_{x_1} -\textbf{i}\partial_{x_2} \\ \partial_{x_1}-\textbf{i}\partial_{x_2}
                                  &   - \partial_{x_3}+\textbf{i}\partial_{x_4}
                                                                               \end{array}
                                    \right),
\end{equation}by
 (\ref{eq:k-CF}) ¡¡and¡¡¡¡ (\ref{eq:Z_A^A'}).
    So $ Z_{0}^{0'}$ and $ Z_{1}^{1'}$ are independent of $x_0$ and $x_1
$. According to $(A, B)=(0,0), (1,1), (1,0)$ and $ (0,1)$, we get
 \begin{equation*}\begin{split} \mathscr L_k(r_1;\xi)= & \quad \sum_{ A_2', \ldots,A_k' } Z_{0}^{1'}\overline{ Z_{0}^{1'}} r_1\cdot|\xi_{ A_2'  \ldots A_k'0 }  |^2+ \sum_{s=2}^k  \sum_{ A_2', \ldots,\widehat{A_s'}, \ldots } Z_{0}^{1'}\overline{ Z_{0}^{1'}} r_1\cdot\xi_{1'A_2'  \ldots \widehat{A_s'} \ldots A_k'0 }   \overline{\xi_{1'A_2'  \ldots \widehat{A_s'} \ldots A_k'0   }} \\
\quad& +\sum_{ A_2', \ldots,A_k' } Z_{1}^{0'}\overline{ Z_{1}^{0'}} r_1\cdot|\xi_{ A_2'  \ldots A_k'1 }  |^2+  \sum_{s=2}^k  \sum_{ A_2', \ldots,\widehat{A_s'}, \ldots } Z_{1}^{0'}\overline{ Z_{1}^{0'}}  r_1\cdot\xi_{0'A_2'  \ldots \widehat{A_s'} \ldots A_k'1 }   \overline{\xi_{0'A_2'  \ldots \widehat{A_s'} \ldots A_k'1   }}
  \\
\quad&+ \sum_{s=2}^k  \sum_{ A_2', \ldots,\widehat{A_s'}, \ldots } Z_{0}^{1'}\overline{ Z_{1}^{0'}} r_1\cdot \xi_{1'A_2'  \ldots \widehat{A_s'} \ldots A_k'1 }   \overline{\xi_{0'A_2'  \ldots \widehat{A_s'} \ldots A_k'0   }}\\
\quad&+ \sum_{s=2}^k  \sum_{ A_2', \ldots,\widehat{A_s'}, \ldots }  Z_{1}^{0'}\overline{ Z_{0}^{1'}}  r_1\cdot \xi_{0'A_2'  \ldots \widehat{A_s'} \ldots A_k'0 }   \overline{\xi_{1'A_2'  \ldots \widehat{A_s'} \ldots A_k'1   }}
   \\
= &  4 \sum_{  A,A_2',    \ldots,  {A_k'}  } | \xi_{ A_2'    \ldots A_k'A } |^2+4(k-1)\sum_{ B_3',  \ldots, B_k' }\left( | \xi_{ 1'B_3'  \ldots B_k'0   } |^2+|  \xi_{ 0'B_3'  \ldots B_k'1   } |^2\right) \geq4   | \xi  |^2
   \end{split}\end{equation*}
by
\begin{equation}\label{eq:ZZr'}
Z_{0}^{1'}\overline{ Z_{0}^{1'}}r_1=4=  Z_{1}^{ 0'}\overline{ Z_{1}^{0'}}r_1  \qquad {\rm and} \qquad Z_{1}^{0'}\overline{ Z_{0}^{1'} }r_1=0=Z^{1'}_{0}\overline{ Z_{1}^{0'}} r_1,
\end{equation}since
$
Z_{0}^{1'}\overline{ Z_{0}^{1'}}=\partial_{x_1}^2+\partial_{x_2}^2=Z_{1}^{0'}\overline{ Z_{1}^{0'}} $ and $   Z_{0}^{1'}\overline{ Z_{1}^{0'}}=-\partial_{x_1}^2+\partial_{x_2}^2 -2\mathbf{i}\partial_{x_1} \partial_{x_2}
$ by (\ref{eq:k-CF-raised}). Similarly $ Z_{0}^{1'}$ and $ Z_{1}^{0'}$ are independent of $x_3$ and $x_4
$ by (\ref{eq:k-CF-raised}), and so
 \begin{equation*}\begin{split} \mathscr L_k(r_2;\xi)= & \quad \sum_{ A_2', \ldots,A_k' } Z_{0}^{0'}\overline{ Z_{0}^{0'}} r_2\cdot|\xi_{ A_2'  \ldots A_k'0 }  |^2+ \sum_{s=2}^k  \sum_{ A_2', \ldots,\widehat{A_s'}, \ldots } Z_{0}^{0'}\overline{ Z_{0}^{0'}} r_2\cdot\xi_{0'A_2'  \ldots \widehat{A_s'} \ldots A_k'0 }   \overline{\xi_{0'A_2'  \ldots \widehat{A_s'} \ldots A_k'0   }} \\
\quad& +\sum_{ A_2', \ldots,A_k' } Z_{1}^{1'}\overline{ Z_{1}^{1'}} r_2\cdot|\xi_{ A_2'  \ldots A_k'1 }  |^2+  \sum_{s=2}^k  \sum_{ A_2', \ldots,\widehat{A_s'}, \ldots } Z_{1}^{1'}\overline{ Z_{1}^{1'}}  r_2\cdot\xi_{1'A_2'  \ldots \widehat{A_s'} \ldots A_k'1 }   \overline{\xi_{1'A_2'  \ldots \widehat{A_s'} \ldots A_k'1   }}
  \\
\quad&+ \sum_{s=2}^k  \sum_{ A_2', \ldots,\widehat{A_s'}, \ldots } Z_{0}^{0'}\overline{ Z_{1}^{1'}} r_2\cdot \xi_{0'A_2'  \ldots \widehat{A_s'} \ldots A_k'1 }   \overline{\xi_{1'A_2'  \ldots \widehat{A_s'} \ldots A_k'0   }}\\
\quad&+ \sum_{s=2}^k  \sum_{ A_2', \ldots,\widehat{A_s'}, \ldots }  Z_{1}^{1'}\overline{ Z_{0}^{0'}}  r_2\cdot \xi_{1'A_2'  \ldots \widehat{A_s'} \ldots A_k'0 }   \overline{\xi_{0'A_2'  \ldots \widehat{A_s'} \ldots A_k'1   }}
   \\
= &  4 \sum_{  A,A_2',    \ldots,  {A_k'}  } | \xi_{ A_2'    \ldots A_k'A } |^2+4(k-1)\sum_{ B_3',  \ldots, B_k' }\left( | \xi_{ 0'B_3'  \ldots B_k'0   } |^2+|  \xi_{ 1'B_3'  \ldots B_k'1   } |^2\right) \geq4   | \xi  |^2\end{split}\end{equation*}
by $
    Z_{0}^{0'}\overline{ Z_{0}^{0'}}r_2=4=Z_{1}^{1'}\overline{ Z_{1}^{1'}}r_2 $ and $Z_{1}^{1'}\overline{ Z_{0}^{0'}}r_2= 0=Z_{0}^{0'}\overline{ Z_{1}^{1'}}r_2,
$ since
$
    Z_{0}^{0'}\overline{ Z_{0}^{0'}}=\partial_{x_3}^2+\partial_{x_4}^2=Z_{1}^{1'}\overline{ Z_{1}^{1'}}$ and $ Z_{1}^{1'}\overline{ Z_{0}^{0'}}= \partial_{x_3}^2-\partial_{x_4}^2 -2\mathbf{i}\partial_{x_3} \partial_{x_4},
$ which follows from (\ref{eq:k-CF-raised}).
The result follows.
  \end{proof}

\begin{rem} The $k$-pseudoconvexity in  (\ref{eq:pseudoconvex}) is the natural convexity associated to the  $k$-Cauchy-Fueter complex (cf. H\"ormander \cite{Hor-convex} for    notions  of convexity associated to    differential operators).
The $k$-pseudoconvexity similarly defined in   $\mathbb{R}^{4n}$  is different from the   pseudoconvexity introduced in \cite{Wa11}, which is  based on the notion of   a plurisubharmonic function over quaternionic space introduced by Alesker  \cite{alesker1} (see also \cite{WaWang}).
   \end{rem}
    \section{The $L^2$-estimate  and the proof of the main theorem}
     \subsection{The $L^2$-estimate  }
      By the following density Lemma \ref{prop:domain-k} and Proposition \ref{prop:Neumann-k} (2), it is sufficient to show the   $L^2$ estimate (\ref{eq:L2-k}) for $f\in C ^\infty (\overline{ \Omega},   \odot^{k-1}
 \mathbb{C}^{2 } \otimes\mathbb{C}^{2 })$ satisfying the boundary condition (\ref{eq:boundary-condition}). By expanding symmetrisation in terms of (\ref{eq:sym-1}) and using commutators,   we get
   \begin{equation}\label{eq:sigma-0-k}\begin{split}
k\langle {\mathscr D}_0^*f, {\mathscr D}_0^*f\rangle_{\varphi } &  =  k\langle {\mathscr D}_0 {\mathscr D}_0^*f,f\rangle_{\varphi } = k \sum_{ B, A_1', \ldots,A_k'}\left(Z_{B}^{A_1'}\sum_{ A }\delta_{(A_1 ' }^{ A}  f_{ A_2'\ldots A_k')A   } ,  f_{ B   A_2'\ldots A_k' }\right)_{\varphi }\\& = \sum_{A,B, A_1', \ldots,A_k'}  \left( Z_{B}^{A_1'} \delta_{ A_1 ' }^{ A} f_{ A_2'\ldots  A_k' A}  +  \sum_{s=2}^k  Z_{B}^{A_1'}\delta_{ A_s ' }^{ A} f_{   A_1'\ldots \widehat{A_s'} \ldots A_k' A } ,
   f_{   A_2'\ldots A_k' B}\right)_{\varphi } \\& = \sum_{A,B, A_1', \ldots,A_k'}\left\{ \left( \delta_{ A_1 ' }^{ A} Z_{B}^{A_1'} f_{ A_2'\ldots  A_k' A  }  +   \sum_{s=2}^k  \delta_{ A_s ' }^{ A} Z_{B}^{A_1'} f_{   A_1'\ldots \widehat{A_s'} \ldots A_k' A } ,
   f_{ A_2'\ldots A_k' B  }\right)_{\varphi }\right.\\&\quad \qquad \qquad  +  \left.\left( \left[ Z_{B}^{A_1'}, \delta_{ A_1 ' }^{ A}\right] f_{ A_2'\ldots  A_k' A }  +   \sum_{s=2}^k \left[Z_{B}^{A_1'},\delta_{ A_s ' }^{ A}\right] f_{   A_1'\ldots \widehat{A_s'} \ldots A_k' A   } ,
   f_{ A_2'\ldots A_k'B }\right )_{\varphi }\right\}  \end{split} \end{equation}
   Using Stokes' formula (\ref{eq:Stokes-delta}) and commutators
  \begin{equation}\label{eq:commutators}
     \left [Z^{ A ' }_{ B},\delta_{B ' }^{ A}\right ]=-  Z^{ A ' }_{ B}Z_{B ' }^{ A}\varphi
  \end{equation}
       by $[Z^{ A ' }_{ A},Z_{B ' }^{ A} ]=0$  (since they are of constant coefficients), we get
    \begin{equation}\label{eq:sigma-0-k'}\begin{split}k\langle {\mathscr D}_0^*f, {\mathscr D}_0^*f\rangle_{\varphi }& = \sum_{A,B, A_1', \ldots,A_k'}  \left( Z_{B}^{A_1'} f_{  A_2'\ldots  A_k' A  } ,Z^{ A_1 ' }_{ A}
   f_{ A_2'\ldots A_k' B }\right)_{\varphi }\\&\qquad +   \sum_{A,B, A_1', \ldots,A_k'}\sum_{s=2}^k  \left(  Z_{B}^{A_1'} f_{   A_1'\ldots \widehat{A_s'} \ldots A_k' A } ,Z^{ A_s ' }_{ A}
   f_{ A_2'\ldots A_k' B }\right)_{\varphi } +\mathscr B_k +\mathscr C_k\\&  =:\Sigma_1 +\Sigma_2+\mathscr B_k +\mathscr C_k \end{split}\end{equation}
where the commutator term is
 \begin{equation}\label{eq:C_k}\begin{split}
    \mathscr C_k:&=- \sum_{A,B, A_1', \ldots,A_k'} \left(   Z_{B}^{A_1'}Z_{ A_1 ' }^{ A} \varphi(x)\cdot f_{ A_2'\ldots  A_k' A }   +  \sum_{s=2}^k   Z_{B}^{A_1'}Z_{ A_s ' }^{ A}\varphi(x)\cdot f_{     A_1'\ldots \widehat{A_s'} \ldots  A_k' A } ,
   f_{  A_2'\ldots A_k' B }\right)_{\varphi }\\&
    =  \int_{ \Omega}  \mathscr L_k( \varphi;f(x))(x)e^{- \varphi(x)}dV \geq   c\|f\|_{\varphi }^2
\end{split} \end{equation}by
 (\ref{eq:commutators}) and   assumption (\ref{eq:Levi0}),
and the boundary term is
 \begin{equation}\label{eq:B_k}
    \mathscr B_k:=  \sum_{A,B, A_1', \ldots,A_k'} \int_{\partial\Omega}  \sum_{s=1}^k  Z_{ A_s ' }^{ A}r \cdot Z_{B}^{A_1'}f_{    A_1'\ldots \widehat{A_s'} \ldots    A_k' A}\cdot
  \overline{ f_{  A_2'\ldots A_k' B }} e^{-{\varphi }}dS.
 \end{equation}This boundary term    can also be handled by Morrey's technique. Since $\sum_{A=0,1 } Z_{ (A_1'}^Ar \cdot f_{A_2 '\ldots A_k')   A  }=0 $ vanishing on the boundary  for fixed $ {A_1 '\ldots A_k'}=0',1'$, there exists $(k+1)$  functions  $\lambda_{A_1 '\ldots A_k'}=\lambda_{(A_1 '\ldots A_k')}$ such that for $x$ near $ \partial\Omega$,
 \begin{equation*}
    k  \sum_{A=0,1 } Z_{ (A_1'}^Ar(x)\cdot f_{A_2 '\ldots A_k')   A  }(x)=\lambda_{A_1 '\ldots A_k'}(x) \cdot r(x).
 \end{equation*}
   Now differentiate  this equation
by the complex vector field $Z^{ A_1 ' }_{ B}$ to get
 \begin{equation*}
  \sum_{A=0,1 }\left\{k Z^{ A_1 ' }_{ B} Z_{ (A_1'}^Ar\cdot f_{A_2 '\ldots A_k')   A  }+   \sum_{s=1}^k Z_{  A_s'}^Ar\cdot Z^{ A_1 ' }_{ B }f_{A_1' \ldots\widehat{A_s'} \ldots A_k'    A  }\right\}=Z^{ A_1 ' }_{ B}\lambda_{A_1 '\ldots A_k'} \cdot r+\lambda_{A_1 '\ldots A_k'} Z^{ A_1 ' }_{ B} r.
   \end{equation*}
   Then multiplying it by $\overline{   f_{A_2 '\ldots A_k' B     }}$ and taking summation over $ B,A_1 ',\ldots, A_k'$, we get that
\begin{equation}\label{eq:boundary-id}\begin{split}
    & -   \mathscr L_k(r;f(x))(x)+  \sum_{A,B,A_1 ',\ldots, A_k'} \sum_{s=1}^k Z_{  A_s'}^Ar(x)\cdot Z^{ A_1 ' }_{ B }f_{A_1' \ldots\widehat{A_s'}\ldots A_k'    A  } \overline{   f_{A_2 '\ldots A_k' B     }}   \\=&-
          \sum_{A_1 ',\ldots, A_k'}\left(\lambda_{A_1 '\ldots A_k'}(x) \cdot  \sum_{B }    \overline{Z_{( A_1 ' }^{ B} r(x) \cdot  f_{A_2 '\ldots A_k') B     }}\right)=0 \end{split}\end{equation} on the boundary $\partial \Omega$,
by using $r(x)|_{\partial \Omega}=0$, symmetrisation  by (\ref{eq:bracket-omit}) and  the boundary condition  (\ref{eq:boundary-condition}) for  $f$, where $\lambda$ is symmetric in the primed indices. Apply (\ref{eq:boundary-id}) to the boundary term  (\ref{eq:B_k}) to get
   \begin{equation}\label{eq:B-k}
\mathscr B_k
    =  \int_{\partial\Omega} \mathscr L_k(r;f(x)) e^{-\varphi}dS
    \geq0
  \end{equation}
by the pseudoconvexity (\ref{eq:pseudoconvex})-(\ref{eq:pseudoconvex-tangent}) of $r$ and $f$ satisfying the boundary condition (\ref{eq:boundary-condition}).

Now for the second sum of (\ref{eq:sigma-0-k'}), we have
\begin{equation}\label{eq:sigma-2-k'}\begin{split}
 \Sigma_2&   =(k-1) \sum_{A,B,  B _3', \ldots, B _k'}  \left( \sum_{A '} Z^{ A ' }_{  B } f_{ A      A '{ B _3'} \ldots   B _k' } , \sum_{B '} Z^{ B ' }_{ A} f_{  B    B'{ B _3'}\ldots    B _k'  }\right)_{\varphi } \\&    =(k-1)\sum_{A,B, { B _3'}, \ldots ,  B _k'} \left\{\left\|\sum_{A'} Z^{ A ' }_{  A} f_{B      A' { B _3'} \ldots   B _k' }\right\|^2_{\varphi }-2   \left\|\sum_{A'} Z^{ A ' }_{ [ A } f_{ B]  A'  { B _3'} \ldots   B _k' }  \right\|^2_{\varphi }\right\}\\&
    =(k-1)\sum_{A,B, { B _3'}, \ldots,   B _k'} \left\|\sum_{A'}  Z^{ A ' }_{  A} f_{B    A'  { B _3'} \ldots   B _k' }\right\|^2_{\varphi }- 2 (k-1) \left\| {\mathscr D}_1  f  \right\|^2_{\varphi }
   \end{split}\end{equation}by relabeling indices and applying Lemma \ref{lem:sym} (3).
By applying Lemma \ref{lem:sym} (3) again and using  $Z_A^{0'}= Z_{1'A}$ and $ Z_A^{1'}=-Z_{0'A}$ in (\ref{eq:Z_A^A'}), we get
\begin{equation}\label{eq:sigma-1'-k}\begin{split}
   \Sigma_1 &= \sum_{ A,B, A_1', \ldots,A_k'} \left\{\left\|  Z^{ A_1 ' }_{ A} f_{  B  A_2'\ldots  A_k' }  \right\|_{\varphi }^2-2  \left\|  Z^{ A_1 ' }_{ [A} f_{  B]  A_2'\ldots  A_k' }  \right\|_{\varphi }^2\right\}\\&  = \sum_{ A,B, A_1', \ldots,A_k'} \left\|  Z^{ A_1 ' }_{ A} f_{  B  A_2'\ldots  A_k' }  \right\|_{\varphi }^2-4\sum_{  A_1', \ldots,A_k'} \left\|  Z_{ A_1 '   [0} f_{  1]  A_2'\ldots  A_k' }  \right\|_{\varphi }^2.
\end{split} \end{equation}
Substituting  (\ref{eq:C_k}) and (\ref{eq:B-k})-(\ref{eq:sigma-1'-k}) into (\ref{eq:sigma-0-k'}), we get the estimate
   \begin{equation}\label{eq:estimate-k-0}
     k \|{\mathscr D}_0^*f\|^2_{\varphi }+ 2 (k-1)    \left\| {\mathscr D}_1  f  \right\|^2_{\varphi }\geq c \left \| f \right\|^2_{\varphi } -\sum_{  A_1', \ldots,A_k'} \left\| 2 Z_{ A_1 '   [0} f_{  1]  A_2'\ldots  A_k' }  \right\|_{\varphi }^2.
   \end{equation}
When $k=2$, the term $\sum_{ A',B'}\left\|2 Z_{ A '  [0} f_{ 1]   B' } \right\|^2_{\varphi }$ is controlled by $4( \|{\mathscr D}_0^*f\|^2_\varphi+      \|d \varphi\|_\infty^2 \cdot \left\|f   \right\|^2_{\varphi }+  \|{\mathscr D}_1f\|^2_\varphi)$ simply by the identity
\begin{equation}\label{eq:Sigma-1-trace}\begin{split}
 - 2 Z_{ A '  [0} f_{ 1]   B' }   &= \sum_{A=0,1}Z_{ A '}^{A} f_{ B' A  } = \sum_A Z_{( A '  }^{A} f_{ B') A } + \sum_{A }Z_{ [A '}^{A} f_{ B'] A }  ,
 \end{split}\end{equation} where the first sum is $ ({\mathscr D}_0^*f)_{A '    B'}+ \sum_A Z_{( A '  }^{A}\varphi\cdot f_{  B')A }$, while the second sum is $\sum_{A=0,1}Z_{ [0 '}^{A} f_{ 1'] A }
   =  \sum_{A'=0',1'}Z_{ [0 }^{A'} f_{1]  A '  }=  ({\mathscr D}_1f)_{01}$.

To estimate the last term in (\ref{eq:estimate-k-0}) for general $k$,  fix  $A_1',\ldots,  A_k'$.
{\it Case  i:     $A_1'+\ldots+  A_k'=l\neq0, k$
 and $ A_1 '=0' $}.   It follows from (\ref{eq:sym-1}) that
 \begin{equation}\label{eq:sym-k}\begin{split}
   \sum_{A }  Z_{ (0 '}^A f_{  0'\ldots 0'\underbrace{\scriptstyle1'  \ldots 1'}_l)  A }&=\frac {k-l}k  \sum_{A } Z_{ 0 '    }^A f_{     0'\ldots 0'\underbrace{\scriptstyle1'  \ldots 1'}_l A}+ \frac { l}k  \sum_{A } Z_{ 1 '    }^A f_{  0'\ldots 0'\underbrace{\scriptstyle 1'  \ldots 1'}_{l-1} A} \\&=-\frac {2(k-l)}k Z_{ 0 '   [0} f_{  1]  0'\ldots 0'\underbrace{\scriptstyle 1' \ldots 1' }_{l }} - \frac {2l}k  Z_{ 1'   [0} f_{  1]  0'\ldots 0'\underbrace{\scriptstyle 1' \ldots 1'}_{l-1}}
   \end{split} \end{equation}
 by $f$ symmetric in the primed indices and using Lemma \ref{lem:raise}.
 Then
 \begin{equation}\label{eq:term-type-1}\begin{split}
    2 Z_{ A_1 '   [0} f_{  1]  A_2'\ldots  A_k' }= &2 Z_{ 0 '   [0} f_{  1]  0'\ldots 0'\underbrace{\scriptstyle 1'  \ldots 1'}_l  }\\
     =&\frac {2(k-l)}k  Z_{ 0 '   [0} f_{  1]  0'\ldots 0'\underbrace{\scriptstyle1'  \ldots 1'}_l }+\frac {2 l}k  Z_{ 1'   [0} f_{  1]  0'\ldots 0' \underbrace{\scriptstyle1' \ldots 1' }_{l-1}}   \\&\hskip 5mm+\frac {2l}k   Z_{ 0 '   [0} f_{  1]  0'\ldots 0'\underbrace{\scriptstyle 1'  \ldots 1'}_l }-\frac {2 l}k  Z_{ 1'   [0} f_{  1]  0'\ldots 0' \underbrace{\scriptstyle1' \ldots 1'}_{l-1} }
     \\=&- \sum_{A }Z_{ (0 '}^A f_{  0'\ldots 0'\underbrace{\scriptstyle 1'  \ldots 1'}_l )  A } -\frac {2l}k\sum_{A'} Z^{A'}_{    [0} f_{  1] A'  0'\ldots 0' \underbrace{\scriptstyle1' \ldots 1'}_{l-1}} \\=&- (\mathscr D_0^*f)_{ A_1'\ldots  A_k' }-  \sum_{A }Z_{ (A_1' }^A\varphi\cdot  f_{  A_2'\ldots  A_k')  A }  - \frac {2l}{ k}(\mathscr D_1 f)_{   01 0'\ldots 0' \underbrace{\scriptstyle1' \ldots 1'}_{l-1}}
   \end{split} \end{equation} by using Lemma  \ref{lem:raise} again,  (\ref{eq:sym-k}) and $f$ symmetric in the primed indices.

  {\it Case ii:     $A_1'+\ldots+  A_k'=k-l\neq0, k$  and $ A_1 '=1' $
 }.   We have the similar identity     by
       \begin{equation}\label{eq:term-type-2}\begin{split}
    2 Z_{ A_1 '   [0} f_{  1]  A_2'\ldots  A_k' }= &2Z_{ 1 '   [0} f_{  1]  1'\ldots 1' \underbrace{\scriptstyle 0'  \ldots 0'}_{l }}\\
     =&\frac {2 (k-l) }k  Z_{ 1 '   [0} f_{  1]  1'\ldots 1' \underbrace{\scriptstyle 0'  \ldots 0'}_{l } }+\frac {2l}k  Z_{ 0'   [0} f_{  1]  1'\ldots 1'\underbrace{\scriptstyle 0'  \ldots 0'}_{l-1} } \\&\hskip 5mm+\frac {2l}k  Z_{ 1'   [0} f_{  1]  1'\ldots 1' \underbrace{\scriptstyle 0' \ldots 0'}_{l} }-\frac {2l}k  Z_{ 0'   [0} f_{  1]  1'\ldots 1' \underbrace{\scriptstyle 0'  \ldots 0'}_{l-1 } } \\=&- (\mathscr D_0^*f)_{ A_1'\ldots  A_k' }-  \sum_{A }Z_{ (A_1' }^A\varphi\cdot  f_{  A_2'\ldots  A_k')  A }  + \frac {2l}{ k}(\mathscr D_1 f)_{   01  1'\ldots 1' \underbrace{\scriptstyle 0'  \ldots 0'}_{l-1 } } .
         \end{split} \end{equation}
     {\it Case iii:  $A_1'=\ldots= A_k'=0'$ or $ 1'$}. We have
   \begin{equation}\label{eq:term-type-3}\begin{split}2Z_{ 0 '   [0} f_{  1]  0'\ldots 0'  }
 = -\sum_{A }  Z_{ (0 '}^A f_{  0'\ldots 0' )  A } =-(\mathscr D_0^*f)_{ 0'\ldots  0' }-  \sum_{A }Z_{ (0' }^A\varphi\cdot  f_{  0'\ldots  0')  A },
   \end{split} \end{equation}and similar identity holds for $A_1'=\ldots=  A_k'=1'$.

     We can use $|(a_1+\ldots a_k)/k|^2\leq (|a_1|^2+\ldots |a_k|^2)/k $ and the Cauchy-Schwarz inequality  to control the norm of  $\sum_{A }Z_{ (A_1' }^A\varphi\cdot  f_{  A_2'\ldots  A_k')  A } $ by $2\|d\varphi\|_\infty^2   \left \| f \right\|^2_{\varphi } $
   since
          \begin{equation}\label{eq:|d-varphi|}
               |d \varphi |^2 = \sum_A \left|Z_{0 '  }^{A}\varphi\right|^2=\sum_A \left|Z_{1 '  }^{A}\varphi\right|^2
          \end{equation} by $\varphi$ real and
 \begin{equation}\label{eq:k-CF-raised-2} \left( Z^{A}_{A'}\right)=  \left(
                                      \begin{array}{rr}  \partial_{x_3} -\textbf{i}\partial_{x_4}
                                    & -\partial_{x_1} -\textbf{i}\partial_{x_2} \\ \partial_{x_1}-\textbf{i}\partial_{x_2}
                                  &     \partial_{x_3}+\textbf{i}\partial_{x_4}
                                                                               \end{array}
                                    \right),
\end{equation}
          by   $Z_{A'}^A$ in (\ref{eq:Z_A^A'}) and  (\ref{eq:k-CF}).   Note that the term $ \|2 Z_{ 0 '   [0} f_{  1]  0'\ldots 0'\underbrace{\scriptstyle 1'  \ldots 1'}_l  }  \|_{\varphi }^2$ appears $C_{k-1}^l$ times in the summation
   \begin{equation*}
       \sum_{  A_1', \ldots,A_k'} \left\| 2 Z_{ A_1 '   [0} f_{  1]  A_2'\ldots  A_k' }   \right\|_{\varphi }^2,
   \end{equation*} while $ \| (\mathscr D_1 f)_{   01 1'\ldots 1'\underbrace{\scriptstyle 0'  \ldots 0'}_{l-1} } \|_{\varphi }^2$ appears $C_{k-2}^{l-1}$ times in the definition of $\left\| \mathscr D_1 f  \right\|_{\varphi }^2$. It is similar for terms in the case {\it ii}.
      Then by using $|a+b+c|^2\leq  4|a|^2+2|b|^2+4|c|^2$ and $\frac {l^2}{k^2 }C_{k-1}^l\leq C_{k-2}^{l-1}$, we get from (\ref{eq:term-type-1})-(\ref{eq:term-type-3}) that
   \begin{equation}\label{eq:sigma-1}\begin{split}
    \sum_{  A_1', \ldots,A_k'} \left\| 2 Z_{ A_1 '   [0} f_{  1]  A_2'\ldots  A_k' }   \right\|_{\varphi }^2 &\leq  4\left\| \mathscr D_0^*f  \right\|_{\varphi }^2+ 4\|d\varphi\|_\infty^2   \left \| f \right\|^2_{\varphi } +  16\sum_{l=1}^{k-1}C_{k-2}^{l-1}\| (\mathscr D_1 f)_{   01 0'  \ldots 0'\underbrace{\scriptstyle 1'\ldots 1'}_{l-1} } \|_{\varphi }^2\\&\qquad\qquad\qquad\qquad\qquad\qquad\quad +16\sum_{l=1}^{k-1}C_{k-2}^{l-1}\| (\mathscr D_1 f)_{   01 1'\ldots 1'\underbrace{\scriptstyle 0'  \ldots 0'}_{l-1} } \|_{\varphi }^2
     \\&= 4\left\| \mathscr D_0^*f  \right\|_{\varphi }^2+ 4\|d\varphi\|_\infty^2   \left \| f \right\|^2_{\varphi } +16 \left\| \mathscr D_1 f  \right\|_{\varphi }^2.
\end{split} \end{equation}
   Now substitute  (\ref{eq:sigma-1}) into (\ref{eq:estimate-k-0}) to get the estimate
   \begin{equation*}
     (k+4) \|{\mathscr D}_0^*f\|^2_{\varphi }+ (2  k+14)  \left\| {\mathscr D}_1  f  \right\|^2_{\varphi }\geq (c- 4\|d\varphi\|_\infty^2 ) \left \| f \right\|^2_{\varphi }.
   \end{equation*}
The estimate (\ref{eq:L2-k}) is proved.\hskip 109mm $\Box$

  \begin{rem} \label{eq:C0-kappa} (1)     For the weight $ \kappa\varphi$, we have
      $
  \mathscr L_k(\kappa\varphi;\xi) =  \kappa \mathscr L_k(\varphi;\xi) \geq c\kappa  |\xi |^2
    $
      for $\xi\in \odot^{k-1}\mathbb{C}^{2 }\otimes \mathbb{C}^{2 }$, and  $c\kappa-4\|d (\kappa\varphi)\|_\infty^2=\kappa (c-4 \kappa\|d\varphi\|_\infty^2)>0$ if $0<\kappa<\frac c{4 \|d\varphi\|_\infty^2}$.

(2) On    $\mathbb{R}^{4n}$ with $n>1$, the negative term in $ \Sigma_1$ in (\ref{eq:sigma-1'-k}) becomes
$
   -\sum_{A,B=0}^{2n-1} \sum_{   A_1', \ldots,A_k'}   \|  Z^{ A_1 ' }_{ [A} f_{  B]  A_2'\ldots  A_k' }  \|_{\varphi }^2,
$
 which can not be simply estimated by using (\ref{eq:sym-k}). But over the whole space $\mathbb{R}^{4n}$ with weight $\varphi=|x|^2$, if we do not handle the nonnegative term  $  \sum_{A,B, A_1', \ldots,A_k'}  ( Z_{B}^{A_1'} \delta_{ A_1 ' }^{ A} f_{ A_2'\ldots  A_k' A}    ,
   f_{   A_2'\ldots A_k' B} )_{\varphi } $ in (\ref{eq:sigma-0-k}) by using    commutators and   drop it directly,  we can obtain a weak    $L^2$ estimate  (cf. \cite{Wa-weighted}).
\end{rem}

 \subsection{The Density Lemma }The following density lemma can be deduced from a general result due to H\"ormander \cite{Hor} (see also \cite{SCZ}).

   \begin{lem}\label{prop:domain-k}
$ C^\infty \left(\overline{\Omega}, \odot^{k-1
}\mathbb{C}^{2 } \otimes \mathbb{C}^{2 } \right)\cap {\rm Dom} ({\mathscr D}_0^*)  $ is dense in ${\rm Dom} ({\mathscr D}_0^*) \cap  {\rm Dom}  ({\mathscr D}_1)$.
\end{lem}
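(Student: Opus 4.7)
The plan is to mirror the proof of Lemma \ref{prop:domain} for $k=2$, substituting the appropriate matrix sizes and verifying the constant-dimension hypothesis of H\"ormander's Proposition \ref{prop:minimal-domain2} in this broader setting. Using the identifications (\ref{eq:isomorphisms}), I would regard ${\mathscr D}_0^*$ as a $(k+1)\times 2k$-matrix valued first-order operator $\mathcal{L}$ and ${\mathscr D}_1$ as a $(k-1)\times 2k$-matrix valued first-order operator $\mathcal{M}$, each with constant-coefficient principal parts built from $Z^{A'}_A$'s together with a bounded zero-order contribution coming from the weight $\varphi$. Exactly as in (\ref{eq:L})--(\ref{eq:Lj-Mj}), the weight-dependent part $-\widehat{\mathcal{L}}\varphi$ is a smooth multiplication operator on the bounded $\Omega$, so the passage between the weighted and unweighted inner products shows that ${\rm Dom}({\mathscr D}_0^*)$ coincides with the minimal domain of $\mathcal{L}$ (viewed as an operator on unweighted $L^2$), while ${\rm Dom}({\mathscr D}_1)$ is, by definition, the maximal domain of $\mathcal{M}$.

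With this identification in place, I would proceed with the standard partition-of-unity argument. Fix $f\in{\rm Dom}({\mathscr D}_0^*)\cap{\rm Dom}({\mathscr D}_1)$ and choose a finite open cover $\{\mathcal{U}_\nu\}$ of $\overline{\Omega}$ separated into those $\nu\in I'$ with $\overline{\mathcal U_\nu}\subset\Omega$ and those $\nu\in I''$ meeting $\partial\Omega$, together with a subordinate partition of unity $\{\rho_\nu\}$. Writing $f_\nu:=\rho_\nu f$, the identity $\widetilde{\mathcal L f_\nu}=\mathcal L\widetilde{f_\nu}$ continues to hold as in (\ref{eq:tilde-L}), so each $f_\nu$ lies in the minimal domain of $\mathcal L$ on its respective chart. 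For $\nu\in I'$ a Friedrichs mollifier produces the required approximating sequence $f_{\nu;n}\in C_0^\infty(\mathcal U_\nu)$ converging to $f_\nu$ in graph norm for both $\mathcal L$ and $\mathcal M$. For $\nu\in I''$, a diffeomorphism $\mathscr F_\nu$ straightens the boundary and pushes $\mathcal L,\mathcal M$ forward to operators $L,M$ on the half-space $U_{\nu-}$, to which H\"ormander's Proposition \ref{prop:minimal-domain2} applies.

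The main obstacle, as in the model case, is verifying the constant-dimension hypothesis required by Proposition \ref{prop:minimal-domain2}, namely
\begin{equation*}
\dim\ker L_N(y)=k-1\qquad\text{and}\qquad\dim\bigl(\ker L_N(y)\cap\ker M_N(y)\bigr)=0
\end{equation*}
for all $y\in U_\nu$, where $L_N$ and $M_N$ are the principal symbols evaluated at the nonzero covector $\xi=(J_{1N}(y),\ldots,J_{4N}(y))$. By the symbol formula (\ref{eq:k-CF-}) with $Z^{A'}_A$ replaced by $\xi^{A'}_A$, the map $L_N^t:u\mapsto\bigl(\xi^{A'}_A u_{A'A_2'\ldots A_k'}\bigr)$ from $\odot^k\mathbb{C}^2$ to $\odot^{k-1}\mathbb{C}^2\otimes\mathbb{C}^2$ is injective whenever $\xi\neq 0$ (since $\det(\xi^{A'}_A)=|\xi|^2\neq 0$, so already a single contraction cannot annihilate a nonzero totally symmetric spinor), hence $L_N$ has constant rank $k+1$ and constant kernel dimension $k-1$. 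For the second identity I would argue via exactness of the symbol sequence: since $\mathscr D_1\circ\mathscr D_0=0$, the image of $L_N^t$ is contained in $\ker M_N$, and a dimension count $(k+1)+(k-1)=2k$ together with injectivity of $L_N^t$ forces the symbol complex to be exact at the middle, i.e.\ $\ker M_N=\operatorname{Im}L_N^t=(\ker L_N)^{\perp}$, whence $\ker L_N\cap\ker M_N=0$. This ellipticity statement is the analog of (\ref{eq:minimal-L}) and is the only place where the spinor-algebraic structure of the complex enters nontrivially in this lemma.

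Once the dimension claim is in hand, Proposition \ref{prop:minimal-domain2} provides a sequence $h_{\nu;n}\in C_0^\infty(U_\nu,\mathbb{C}^{2k})$ whose restrictions to $U_{\nu-}$ lie in the minimal domain of $L$ and converge to $({\mathscr F}_\nu^{-1})^*f_\nu$ in the graph norm of $L$ and $M$. Pulling back by $\mathscr F_\nu$ and summing over $\nu$ produces $f_n\in C^\infty(\overline\Omega,\odot^{k-1}\mathbb{C}^2\otimes\mathbb{C}^2)$ lying in the minimal domain of $\mathcal L$, with $f_n\to f$, $\mathcal L f_n\to\mathcal L f$, $\mathcal M f_n\to\mathcal M f$ in $L^2_\varphi$. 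Since the minimal domain of $\mathcal L$ is contained in ${\rm Dom}({\mathscr D}_0^*)$, the approximating sequence has the required property, completing the proof.
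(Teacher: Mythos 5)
Your architecture is the same as the paper's: identify ${\rm Dom}({\mathscr D}_0^*)$ with the minimal domain of a $(k+1)\times 2k$ first-order operator $\mathcal L$ and ${\rm Dom}({\mathscr D}_1)$ with the maximal domain of a $(k-1)\times 2k$ operator $\mathcal M$, localize by a partition of unity, mollify on interior patches, apply H\"ormander's Proposition \ref{prop:minimal-domain2} on boundary patches, and reduce everything to the two constant-dimension claims for $L_4(y)$ and $M_4(y)$. Your verification of $\dim\ker L_4(y)=k-1$ is correct and is in fact a cleaner route than the paper's explicit staircase matrix: injectivity of the contraction $u\mapsto\bigl(\sum_{A'}\xi^{A'}_Au_{A'A_2'\ldots A_k'}\bigr)$ for $\xi\neq0$ follows from $\det(\xi^{A'}_A)=|\xi|^2$, so $L_4(y)$ has full rank $k+1$.

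The gap is in the second claim. You assert that $\mathrm{Im}\,\sigma(\mathscr D_0)(\xi)\subseteq\ker M_4$ together with ``the dimension count $(k+1)+(k-1)=2k$ and injectivity'' forces exactness of the symbol sequence at the middle term. It does not. The count only says the Euler characteristic of the symbol complex vanishes; with $H^0=0$ this makes the middle cohomology and the cokernel of $M_4(\xi)$ have \emph{equal} dimension, so exactness at the middle is precisely \emph{equivalent} to surjectivity of $M_4(\xi)$ for $\xi\neq0$ --- which is the thing that still has to be proved. Equivalently, since $(\ker L_4)^{\perp}=\mathrm{Im}\,\sigma(\mathscr D_0)(\xi)\subseteq\ker M_4$ and $\mathbb C^{2k}=\ker L_4\oplus(\ker L_4)^{\perp}$, one has $\ker L_4\cap\ker M_4=0$ if and only if $\dim\ker M_4=k+1$, i.e.\ if and only if $M_4(\xi)$ has full rank $k-1$. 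So your argument is circular at exactly the point where the spinor algebra must enter. The repair is either the paper's direct computation --- for $a\in\ker L_4(y)\cap\ker M_4(y)$, combining suitable rows of $L_4$ with rows of $M_4$ isolates the invertible $2\times2$ blocks of (\ref{eq:minimal-L}) acting on successive pairs of components of $a$, forcing $a=0$ inductively --- or an independent proof that $M_4(\xi)$ is onto for every $0\neq\xi\in\mathbb R^4$, which does hold (the staircase structure of $M_4$ gives linearly independent rows because each row introduces a fresh pair of columns whose entries cannot both vanish for real $\xi\neq0$), but must be checked rather than inferred from a dimension count.
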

In general, let $\Omega\subset \mathbb{R}^N$ be an
open set  and let
\begin{equation}\label{eq:L'}
   L(x,\partial) = \sum_{j=1}^N L_j(x)\partial_{x_j} + L_0(x)
\end{equation}
 be a first-order differential operator,  where
$L_j\in C^1(\Omega, \mathbb{C}^{K\times J})$, $j =  1 ,\cdots, N$, $L_0\in C^0(\Omega, \mathbb{C}^{K\times J})$. Here $\mathbb{C}^{K\times J} $ denotes the space of
$K \times J$ matrices with complex coefficients.   The graph of the {\it maximal differential
operator} defined by $L(x, \partial)$ in $L^2$ consists of all pairs $(f, u) \in L^2(\Omega, \mathbb{C}^{  J}) \times L^2(\Omega, \mathbb{C}^{K })$ such
that $L(x, \partial) f =u$ in the sense of distributions, i.e.
\begin{equation}\label{eq:unweighted}
 \langle f,L^*(x,\partial) v \rangle=\langle u,   v\rangle, \qquad{\rm for} \quad{\rm  any }\quad v\in C_0^\infty (\Omega, \mathbb{C}^{K })
\end{equation}where
$
   L^*(x,\partial) =- \sum L_j^*(x)\partial_{x_j} + L_0^*(x)-\sum \partial_{x_j}L_j^*(x)
$
is the formal adjoint operator of $L(x, \partial)$. Thus the   maximal differential
operator defined by $L(x, \partial)$ is the  adjoint of the differential operator $L^*(x, \partial)$ with domain $C_0^\infty (\Omega, \mathbb{C}^{K })$. So the maximal operator is closed and
its adjoint is the closure of $L^*(x, \partial)$, first defined with domain $C_0^\infty (\Omega, \mathbb{C}^{K })$. It is called the
{\it minimal operator} defined by $L^*(x, \partial)$. Similarly, the adjoint of the maximal operator defined
by $L^*(x, \partial)$ is the minimal operator defined by $L (x, \partial)$.

\begin{prop}\label{prop:minimal-domain} (proposition A.1 of \cite{Hor})
   If $\Omega$ is a bounded domain with $C^1$ boundary, $L_j\in C^1$ and $L_0\in C^0$ in a
neighborhood of $\overline{\Omega}$, then the maximal operator defined by $ L ( x, \partial)$ in (\ref{eq:L'}) is the closure of its restriction
to functions which are $C^\infty$ in a neighborhood of $\overline{\Omega}$. The minimal domain of $ L ( x, \partial)$ consists
precisely of the functions $f\in L^2(\Omega, \mathbb{C}^{  J})$ such that $L ( x , \partial ) \widetilde{f}\in L^2(\mathbb{R}^N, \mathbb{C}^{  K})$ if $\widetilde{f}: = f$ in $\Omega$ and
$\widetilde{f}:= 0$ in $\Omega^c$.
\end{prop}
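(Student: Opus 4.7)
The plan is to prove both assertions by Friedrichs mollification combined with a translate-and-mollify construction near the boundary, reducing the graph-convergence to a single commutator estimate under the coefficient regularity $L_j\in C^1$, $L_0\in C^0$.

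For the first assertion (graph-density of smooth functions in $\mathrm{Dom}(L_{\max})$), I would cover $\overline{\Omega}$ by finitely many charts $\{U_j\}_{j=0}^m$ with $U_0\Subset\Omega$ and each boundary patch $U_j$ $(j\geq 1)$ admitting a $C^\infty$ diffeomorphism $\Phi_j$ straightening $\partial\Omega\cap U_j$ onto a piece of $\{y_N=0\}$ and $U_j\cap\Omega$ onto $\{y_N<0\}$. A subordinate partition of unity $\{\zeta_j\}$ reduces the approximation to the individual pieces $\zeta_j f$. The interior piece $\zeta_0 f$ is handled by standard Friedrichs mollification $\rho_\epsilon*(\zeta_0 f)\in C_0^\infty(\Omega)$, with graph convergence following from Friedrichs' commutator lemma. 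For a boundary piece, after pulling back by $\Phi_j$, I would use the outward translate-and-mollify construction: set $f_\delta(y):=(\zeta_j f)(y-\delta\mathbf{e}_N)$, which shifts the graph of $f$ outward and is defined on the enlarged half-space $\{y_N<\delta\}$, a one-sided neighborhood of $\overline{\{y_N\leq 0\}}$; then for $\epsilon<\delta$ the convolution $\rho_\epsilon*f_\delta$ is $C^\infty$ in a neighborhood of $\overline{\{y_N\leq 0\}}$. Letting $\epsilon\to 0$ first and then $\delta\to 0$ gives the desired approximations in graph norm.

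For the second assertion, the direction $\mathrm{Dom}(L_{\min})\subseteq\{f:L\tilde f\in L^2(\mathbb{R}^N)\}$ is immediate: any defining sequence $f_\nu\in C_0^\infty(\Omega)$ satisfies $L\tilde f_\nu=\widetilde{L f_\nu}$ on all of $\mathbb{R}^N$ (no singular boundary term, since $\tilde f_\nu$ is smooth and compactly supported in $\Omega$), so $L^2(\Omega)$-convergence lifts to $L^2(\mathbb{R}^N)$-convergence of the zero-extensions and distributional closedness gives $L\tilde f=\widetilde{Lf}\in L^2(\mathbb{R}^N)$. For the reverse inclusion, given $f$ with $L\tilde f\in L^2(\mathbb{R}^N)$, I would localize as before and in each straightened boundary chart use the inward translate-and-mollify construction: set $\tilde f_\delta(y):=\tilde f(y+\delta\mathbf{e}_N)$, which shifts the zero-extension inward and is supported in $\{y_N\leq-\delta\}$ since $\tilde f$ vanishes outside $\{y_N\leq 0\}$; then $h_{\delta,\epsilon}:=\rho_\epsilon*\tilde f_\delta$ lies in $C_0^\infty(\{y_N<0\})$ whenever $\epsilon<\delta$, and a diagonal choice $\epsilon=\epsilon(\delta)\to 0$ produces an approximating sequence $h_{\delta,\epsilon(\delta)}\in C_0^\infty(\Omega)$ of $f$ in the graph norm, witnessing $f\in\mathrm{Dom}(L_{\min})$.

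The main obstacle in both parts will be the same commutator estimate, namely $\|L f_{\pm\delta}-(Lf)_{\pm\delta}\|_{L^2}=O(\delta)$ as $\delta\to 0$, where the subscript $\pm\delta$ denotes translation by $\mp\delta\mathbf{e}_N$. This bound uses the $C^1$-regularity of the coefficients so that $|L_j(y)-L_j(y\pm\delta\mathbf{e}_N)|=O(\delta)$ uniformly, together with an analogous $O(1)$ estimate for the $C^0$ coefficient $L_0$ combined with $L^2$-continuity of translation. The subtle point is that individual derivatives $\partial_j f$ need not be in $L^2$ --- only the combination $\sum_j L_j\partial_j f+L_0 f$ is controlled --- so the commutator must be handled in a weak sense by duality against smooth test functions, i.e., by the generalization of Friedrichs' commutator lemma to first-order systems with merely $C^1$ leading coefficients. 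This, combined with the standard Friedrichs commutator lemma for $[L,\rho_\epsilon\ast]$, gives the required graph convergence in both constructions.
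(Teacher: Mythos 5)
The paper offers no proof of this proposition to compare against: it is quoted verbatim from the appendix of H\"ormander's paper \cite{Hor}. So your proposal must be judged on its own. Your architecture (partition of unity, interior Friedrichs mollification, outward resp.\ inward translation transversal to the boundary followed by mollification at scale $\epsilon<\delta$) is the standard and correct skeleton, and the easy inclusion ${\rm Dom}(L_{\min})\subseteq\{f:\ L\widetilde f\in L^2(\mathbb{R}^N)\}$ is handled correctly.

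The genuine gap is the estimate you yourself designate as the main obstacle: $\|Lf_{\pm\delta}-(Lf)_{\pm\delta}\|_{L^2}=O(\delta)$ is false in general. One has $Lf_{\pm\delta}-(Lf)_{\pm\delta}=\sum_j\bigl(L_j(y)-L_j(y\pm\delta\mathbf{e}_N)\bigr)\partial_jf_{\pm\delta}+\cdots$, and although the coefficient differences are $O(\delta)$, the factors $\partial_jf_{\pm\delta}$ are merely distributions; the product need not lie in $L^2$ at all, so there is no ``weak sense'' or duality that recovers an $L^2$ bound on a quantity that is generically not an $L^2$ function. For the same reason the iterated limit ``$\epsilon\to0$ first, then $\delta\to0$'' breaks down: for fixed $\delta$, $L(\rho_\epsilon*f_\delta)$ converges only distributionally to $Lf_\delta$, which is not in $L^2$, and its $L^2$ norm is controlled only by $O(\delta/\epsilon)\|f\|_{L^2}$. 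The repair, which is what H\"ormander's argument actually does, is to never separate the translation from the mollification: take $\delta$ proportional to $\epsilon$ and estimate the single combined commutator $L(\rho_\epsilon*f_{\pm\delta})-\rho_\epsilon*\bigl((Lf)_{\pm\delta}\bigr)$, whose leading kernel $\bigl(L_j(y)-L_j(z)\bigr)\partial_{y_j}\rho_\epsilon(y\pm\delta\mathbf{e}_N-z)$ has $L^1$ norm $O\bigl((\delta+\epsilon)/\epsilon\bigr)=O(1)$ by the Lipschitz bound on $L_j$; Young's inequality then gives a bound $C\|f\|_{L^2}$ uniform in $\epsilon$, and since the commutator tends to zero on the dense subspace $C_0^\infty$, it tends to zero strongly on all of $L^2$. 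This coupled estimate is the true content of the Friedrichs lemma you invoke, and it must be applied to the translate-and-mollify operator as a whole, not to the translation alone. A secondary flaw: a $C^1$ boundary admits no $C^\infty$ straightening, and a merely $C^1$ change of variables degrades the leading coefficients $\sum_jL_jJ_{jk}$ to $C^0$, destroying the Lipschitz bound the commutator estimate needs; one should instead represent the boundary locally as a $C^1$ graph and translate in a fixed coordinate direction transversal to it, performing no change of variables so that the coefficients remain $C^1$.
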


Consider another  first-order differential operator
$
    M(x,\partial) = \sum M_j(x)\partial_{x_j} + M_0(x)
$
   where
$M_j\in C^1(U$, $ \mathbb{C}^{K'\times J})$, $j = 1 ,\cdots, N$, $M_0\in C^0(U$, $ \mathbb{C}^{K'\times J})$. For    an open neighborhood $U$  of  $0\in \mathbb{R}^N$, denote $U_-:=\{x\in U; x_N<0\}$.
\begin{prop}\label{prop:minimal-domain2} (proposition A.2 of \cite{Hor})
    Assume that $\ker L_N(x)$ and $\ker L_N(x)\cap \ker M_N(x)$ have constant dimension
when $x \in U$. If $u\in L^2(U_-, \mathbb{C}^{  J})$ is in the minimal domain of $ L ( x, \partial)$ and the maximal
domain of $M(x, \partial)$, and if supp $f $ is sufficiently close to the origin, then there exists a
sequence $f_\nu\in  C_0^\infty (U)$ such that $f_\nu$ restricted to $U_-$ is in the minimal domain of $ L ( x, \partial)$ and
$f_\nu\rightarrow f$, $ L ( x, \partial)f_\nu\rightarrow  L ( x, \partial)f$, $ M ( x, \partial)f_\nu\rightarrow  M ( x, \partial)f$ in $L^2(U_-)$.
\end{prop}

 {\it Proof of Lemma \ref{prop:domain-k}}.  We denote by $ \mathcal{L}$ the differential operator given by the formal adjoint operator (\ref{eq:T*}) of ${\mathscr D}_0$. If we use notations in (\ref{eq:Dk}) to identify linear spaces in (\ref{eq:isomorphisms}), $\mathscr D_0$, $\mathscr D_1$ and $ \mathcal{L}$ are   $(2k)\times(k+1)$-,  $(k-1)\times (2k)$- and $(k+1)\times (2k)$-matrix valued differential operators of first order, respectively. In particular,
  \begin{equation}\label{eq:D0-k-nu-conj}
   \mathcal{ L}f= [\widehat{\mathcal{L}}-\widehat{\mathcal{L}}\varphi]f,\qquad \widehat{\mathcal{L}}=
   { \left(\begin{smallmatrix}Z_{0'}^{0}&
Z_{0'}^{1}
 & 0\hskip 4mm&0\hskip 4mm&0\hskip  4mm&0 \hskip  4mm&0\hskip  4mm&\cdots\\ \frac 1k Z_{1'}^{0}&\frac 1k Z_{1'}^1&\frac {k-1}k Z_{0'}^0&
\frac {k-1}k Z_{0'}^1
 &0\hskip  4mm&0\hskip  4mm &0\hskip  4mm&\cdots\\
0\hskip  3mm&0\hskip  3mm&\frac {2}k Z_{1'}^{0}&\frac {2}k  Z_{1'}^1&\frac {k-2}k  Z_{0'}^0&
\frac {k-2}k Z_{0'}^1&0 \hskip  4mm& \cdots \\
0\hskip  3mm&0\hskip  3mm& 0\hskip  4mm&0\hskip  4mm&\frac {3}k Z_{1'}^{0}&\frac {3}k  Z_{1'}^1&\frac {k-3}k  Z_{0'}^0&
 \cdots\\\vdots \hskip 3mm&\vdots\hskip 3mm &\vdots\hskip 4mm &  \vdots\hskip 4mm & \vdots\hskip 4mm & \vdots\hskip 4mm &\vdots\hskip 4mm&\vdots\hskip 4mm
\end{smallmatrix}\right)_{(k+1)\times (2k)}}
\end{equation}by (\ref{eq:sym-k}),   and $\mathcal{M}:=\mathscr D_1$ with
\begin{equation}\label{eq:mathcal-M-k}\begin{split}
    {\mathcal{M}}& = \frac 12{ \left(\begin{smallmatrix}-Z_1^{0'}&Z_0^{0'}&-Z_1^{1'}&Z_0^{1'}&0&0&0&\cdots\\
    0&0& -Z_1^{0'}&Z_0^{0'}&-Z_1^{1'}&Z_0^{1'} &0&\cdots\\
    0&0&0&0& -Z_1^{0'}&Z_0^{0'} &- {Z}_1^{1'}&\cdots\\ \vdots &\vdots &\vdots &\vdots &\vdots &\vdots&\vdots &\vdots
\end{smallmatrix}\right)_{(k-1)\times (2k)}} .  \end{split}\end{equation} Note that here $\mathcal{ L}$ as the formal adjoint operator of $\mathscr D_0 $ is different from that in \cite{CMW} \cite{Wa10}, because the inner product     (\ref{eq:inn-product2}) we use here is  different from that in \cite{CMW} \cite{WR} \cite{Wa10} even when $\varphi=0$.
     Since $Z_A^{A'}$'s and $Z^A_{A'}$'s are complex vector fields with constant coefficients,  we can write
\begin{equation}\label{eq:Lj-Mj}
 \mathcal{L}=\sum_{j=1}^4\mathcal{ L}_j\frac \partial{\partial x_j}+\mathcal{ L}_0, \qquad \mathcal{ L}_0=-\widehat{\mathcal{L}}\varphi, \qquad\mathcal{ M}=\sum_{j=1}^4\mathcal{M}_j\frac \partial{\partial x_j} ,
\end{equation}
where $\mathcal{ L}_j$'s are constant   $(k+1)\times (2k)$-matrices and $ \mathcal{ M}_j$'s are constant $(k-1)\times (2k)$-matrices.

 By definition,  ${\rm Dom}(\mathscr D_j)$  is exactly the domain  of the maximal  operator defined by the differential operator $\mathscr D_j$, $j=0,1$. To apply the above propositions, let us show that ${\rm Dom}(\mathscr D_0^*)$ coincides with the domain of the minimal operator defined by $\mathcal{L}$. Recall that $f\in {\rm Dom}(\mathscr D_0^*)$  if and only if there exists  some $u\in  L^2(\Omega , \mathbb{C}^{k+1})$ such that  $\langle \mathscr D_0 v, f\rangle_\varphi=\langle v, u\rangle_\varphi$ for any $v\in  {\rm Dom}(\mathscr D_0 )$. In terms of the unweighted inner product $\langle\cdot,\cdot\rangle$ in (\ref{eq:unweighted}), it is equivalent to
  \begin{equation*}
     \langle (\mathscr D_0+¡¡\mathscr D_0\varphi) v, f\rangle =\langle v, u\rangle\end{equation*}for any $v\in  {\rm Dom}(\mathscr D_0 )$,
 since the weight function $\varphi$ is smooth on the bounded domain $\Omega$. Note that the minimal operator defined by $\mathcal{ L}$  in (\ref{eq:D0-k-nu-conj}) is  the   adjoint operator
of the maximal operator defined by $\mathscr D_0+¡¡\mathscr D_0\varphi$ with respect to the unweighted inner product (\ref{eq:unweighted}). so $f$ is in the domain of the minimal operator defined by $\mathcal{L}$. The converse is also true.
 By abuse of notations, we denote the minimal operator defined by $\mathcal{ L}$ also by $\mathcal{ L}$ and the maximal operator defined by $ \mathcal{M}$  also by  $ \mathcal{M}$.

 Suppose that $f\in {\rm Dom} ({\mathscr D}_0^*) \cap  {\rm Dom}  ({\mathscr D}_1)$. Write $ \mathcal{ L} f=u$.  By Proposition \ref{prop:minimal-domain}, $f$   in the  minimal domain of $\mathcal{ L}$ implies that
   \begin{equation*}
     \mathcal{ L}\widetilde{f}=\widetilde{u} \end{equation*}
as $L^2(\mathbb{R}^4 , \mathbb{C}^{k+1}) $ functions.  The problem can be localized as follows. Suppose that $\{\rho_\nu\}$ is a unit partition subordinated to a finite covering $\{ \mathcal{{U}}_\nu\}$ of $\overline{\Omega}$ such that either $\overline{\mathcal{{U}}_\nu}\subset\Omega$  or $ {\mathcal{{U}}_\nu}\cap \partial \Omega\neq\emptyset$. Let $I'$ and $I''$ be the sets of corresponding indices $\nu$, respectively.  Write $f_\nu:=\rho_\nu f$.
 Then $\sum_{\nu\in I'\cup I''} f_\nu=f$, and
 \begin{equation}\label{eq:Lu=f}
    \mathcal{ L} f_\nu =u_\nu \qquad {\rm with }\qquad u_\nu =\rho_\nu \mathcal{ L} f+\mathcal{ L}\rho_\nu \cdot f.
 \end{equation}
   Then $ \widetilde{\rho_\nu f}=\rho_\nu \widetilde{ f}$ by definition, and so
$ \mathcal{ L}\widetilde{f_\nu}=\rho_\nu\mathcal{ L}\widetilde{f}+\mathcal{ L}\rho_\nu\widetilde{f }=\rho_\nu\widetilde{u} +\mathcal{ L}\rho_\nu\widetilde{f }=\widetilde{u}_\nu$. Hence \begin{equation}\label{eq:tilde-L}
\mathcal{ L}\widetilde{f_\nu}=\widetilde{ \mathcal{ L}f_\nu}.
  \end{equation}

For $\nu\in I' $,
 by Friderich's lemma, there exists a sequence $f_{\nu;n}\in C_0^\infty (\mathcal{{{U}}}_{\nu } , \mathbb{C}^{2k})$ such that $ f_{\nu;n}\rightarrow    f_\nu$,  $\mathcal{L}f_{\nu;n}\rightarrow  \mathcal{L}f_\nu$, $ \mathcal{M}f_{\nu;n}\rightarrow  \mathcal{M}f_\nu $ in $L^2( \mathcal{{{U}}}_{\nu } , \mathbb{C}^{2k})$.

For $\nu\in I''$, note that there exists a diffeomorphism ${\mathscr F}_\nu$ from   $\mathcal{{U}}_\nu$ to a neighborhood $ {{U}}_\nu$ of the origin in $\mathbb{R}^4$ such that
 the boundary $\mathcal{{U}}_\nu\cap\partial\Omega$ is mapped to the hyperplane $\{y_4=0\}$ and $\mathcal{{U}}_\nu\cap \Omega$ is mapped to ${{U}}_{\nu-}$. Here we denote by $y=(y_1,\ldots y_4)$ the  coordinates of $U_\nu\subset \mathbb{R}^4$ and $y={\mathscr F}_\nu(x)$. Let $L:=\mathscr F_{\nu*}\mathcal{ L}$, $M:=\mathscr F_{\nu*}\mathcal{ M}$ be differential operators by pushing forward.  Then by definition, we have
 \begin{equation*}
    L=\sum_{k=1}^4\sum_{j=1}^4\mathcal{ L}_jJ_{jk}(y)\frac \partial{\partial y_k}+\mathcal{ L}_0\left({\mathscr F}_\nu^{-1}(y)\right) ,\qquad  M=\sum_{k=1}^4\sum_{j=1}^4\mathcal{ M}_jJ_{jk}(y)\frac \partial{\partial y_k}
 \end{equation*}
 where $(J_{jk}(y))=(\frac{\partial y_k}{\partial x_j})$ is the Jacobian matrix. We claim that
 \begin{equation}\label{eq:claim}
   \dim \ker L_4(y)\equiv k-1\qquad   {\rm and} \qquad   \dim(\ker L_4(y)\cap\ker M_4(y))\equiv 0,
 \end{equation} for $y\in U_\nu$, where
  \begin{equation}\label{eq:L4-M4}
   L_4(y)=\sum_{j=1}^4\mathcal{ L}_jJ_{j4}(y) ,  \qquad  M_4=\sum_{j=1}^4\mathcal{ M}_jJ_{j4}(y).
 \end{equation}
Namely, our $L$ and $M$ satisfy the assumption of Proposition \ref{prop:minimal-domain2}.

 Now define  functions $h_\nu:=({\mathscr F}_\nu^{-1})^*f_\nu\in L^2( {{U}}_{\nu-} , \mathbb{C}^{2k})$. Note that by the property of pulling back of distributions, we have
  \begin{equation*}
     {L} g={\mathscr F}_{\nu*}\mathcal{ L} (g)= ({\mathscr F}_\nu^{-1})^* ( \mathcal{ L} ({\mathscr F}_\nu^*g))\end{equation*}
  for a distribution $g$ on ${{U}}_{\nu }$. Then by pulling (\ref{eq:tilde-L}) back by ${\mathscr F}_\nu^{-1}$,  we get
  $
   {L}\widetilde{h_\nu}=\widetilde{ { L}h_\nu}
$ on $\mathbb{R}^4 $,  and obviously $h_\nu$ is also in the maximal domain of $ { M}$.
Without loss of generality, we can assume   that ${\rm supp}\, {h_\nu}$ is sufficiently close to the origin as required by Proposition \ref{prop:minimal-domain2}.
So we can apply  Proposition \ref{prop:minimal-domain2} to $h_\nu$ to find a sequence $h_{\nu;n}\in C_0^\infty ({{U}}_{\nu } , \mathbb{C}^4)$ such that their restrictions  to ${{U}}_{\nu-}$, denoted by $\dot{h}_{\nu;n} :=h_{\nu;n}|_{{{U}}_{\nu-}}$, are in the minimal domain of $L$, i.e.
 \begin{equation}\label{eq:minimal-L-}
   \widetilde{L\dot{h}_{\nu;n} }=L\widetilde{\dot{h}_{\nu;n} },\quad {\rm and} \quad  \dot{h}_{\nu;n}  \rightarrow  {h}_\nu , \quad L\dot{h}_{\nu;n}\rightarrow L {h}_\nu , \quad M\dot{h}_{\nu;n}\rightarrow M {h}_\nu  \quad {\rm in }\quad  L^2( {{U}}_{\nu-}  , \mathbb{C}^{2k}) .
 \end{equation}
 Now   pulling back to $\mathcal{U}_{\nu }$ by ${\mathscr F}_\nu$, we get functions $f_{\nu;n}:={\mathscr F}_\nu^*(h_{\nu;n})\in C_0^\infty (\mathcal{{{U}}}_{\nu } , \mathbb{C}^{2k})$ satisfying
 \begin{equation*} \widetilde{\mathcal{ L} \dot{f}_{\nu;n} }=\mathcal{L}\widetilde{\dot{f}_{\nu;n}},\quad {\rm and} \quad
    \dot{f}_{\nu;n}\rightarrow  f_\nu , \quad \mathcal{L}\dot{f}_{\nu;n}\rightarrow \mathcal{L}f_\nu , \quad  \mathcal{M}\dot{f}_{\nu;n}\rightarrow \mathcal{M}f_\nu \quad {\rm in }\quad  L^2( \mathcal{{{U}}}_{\nu }\cap \Omega , \mathbb{C}^{2k}) ,
 \end{equation*}
     by pulling back (\ref{eq:minimal-L-}), where $\dot{f}_{\nu;n} :=f_{\nu;n}|_{\Omega  }={\mathscr F}_\nu^*(\dot{h}_{\nu;n})$ is the restriction of $f_{\nu;n}$ to $\Omega$. So by Proposition \ref{prop:minimal-domain}, $\dot{f}_{\nu;n} $ is in the  minimal domain of $\mathcal{ L}$. Then the finite sum $f_n=\sum_{\nu\in I'} f_{\nu;n} +\sum_{\nu\in I''} f_{\nu;n}\in C^\infty (\overline{\Omega }, \mathbb{C}^{2k})$ is also in the  minimal domain of $\mathcal{ L}$, and $ f_{ n}|_\Omega\rightarrow  f $  in $L^2( \Omega , \mathbb{C}^{2k})$.

  It remains to prove the claim (\ref{eq:claim}). For a fixed point $y$, write $\xi:=(J_{14}(y),\ldots,J_{44}(y) )\neq \mathbf{0}$. Comparing (\ref{eq:Lj-Mj}) with  (\ref{eq:L4-M4}), we see that $L_4$ and $M_4$ are exactly the matrices (\ref{eq:D0-k-nu-conj}) and (\ref{eq:mathcal-M-k}) with $\frac \partial{\partial x_j}$ replaced by $\xi_j$, respectively, i.e.
  \begin{equation}\label{eq:L-4}\begin{split}&
      L_4(y)={ \left(\begin{smallmatrix} \xi_3-\mathbf{i}\xi_4&
-\xi_1-\mathbf{i}\xi_2
 & 0  &0  &0  &0\hskip 6mm & \cdots\\ \frac { 1}k(\xi_1-\mathbf{i}\xi_2)& \frac { 1}k(\xi_3+\mathbf{i}\xi_4)& \frac {k- 1}k( \xi_3-\mathbf{i}\xi_4)&
\frac {k- 1}k(-\xi_1-\mathbf{i}\xi_2)
 &0  &0  & \cdots\\
0  &0  &\frac {2}k(\xi_1-\mathbf{i}\xi_2)& \frac {2}k(\xi_3+\mathbf{i}\xi_4)& \frac {k- 2}k( \xi_3-\mathbf{i}\xi_4)&
\frac {k- 2}k(-\xi_1-\mathbf{i}\xi_2)&  \cdots \\
0 &0  & 0  &0 &\frac {3}k(\xi_1-\mathbf{i}\xi_2)& \frac {3}k(\xi_3+\mathbf{i}\xi_4)&
 \cdots\\ \vdots   &\vdots   &\vdots  &  \vdots  & \vdots   & \vdots  &\vdots
 \end{smallmatrix}\right)_{(k+1)\times (2k)}  },\\& M_4(y)=\frac 12
    { \left(\begin{smallmatrix} - {\xi_1}+\textbf{i} {\xi_2}&-  {\xi_3} -\textbf{i} {\xi_4}& {\xi_3}-\textbf{i} {\xi_4}&- {\xi_1} -\textbf{i} {\xi_2} &0&0&0&\cdots\\
    0&0& - {\xi_1}+\textbf{i} {\xi_2}&-  {\xi_3} -\textbf{i} {\xi_4}& {\xi_3}-\textbf{i} {\xi_4}&- {\xi_1} -\textbf{i} {\xi_2}  &0&\cdots\\
    0&0&0&0& - {\xi_1}+\textbf{i} {\xi_2}&-  {\xi_3} -\textbf{i} {\xi_4}& {\xi_3}-\textbf{i} {\xi_4}&  \cdots\\ \vdots &\vdots &\vdots &\vdots &\vdots &\vdots&\vdots &\vdots
\end{smallmatrix}\right)_{(k-1)\times (2k)}  }  ,
\end{split} \end{equation}
  by (\ref{eq:k-CF-raised-2}) and (\ref{eq:k-CF-raised}), respectively. $ L_4(y)$
   is obviously   of rank $k+1$ for any $0\neq\xi\in \mathbb{R}^4$ by
  \begin{equation}\label{eq:minimal-L}
    \det \left(\begin{array}{rr} \xi_3-\mathbf{i}\xi_4&
-\xi_1-\mathbf{i}\xi_2 \\ \xi_1-\mathbf{i}\xi_2& \xi_3+\mathbf{i}\xi_4
\end{array}\right)=|\xi|^2  \qquad {\rm and } \qquad \left(\begin{array}{rr}  \xi_1-\mathbf{i}\xi_2& \xi_3+\mathbf{i}\xi_4
\end{array}\right)\quad     {\rm nondegenerate}.
  \end{equation}  and so   $\ker L_4(y)$ is of dimension  $k-1$ for any $\mathbf{0}\neq\xi\in \mathbb{R}^4$.
  For $  (a_1,\ldots,a_{2k})^t\in \ker L_4(y)\cap\ker M_4(y)$, it is easy to see
   that
   \begin{equation*}
      \left(\begin{array}{rr} \xi_3-\mathbf{i}\xi_4&
-\xi_1-\mathbf{i}\xi_2 \\ \xi_1-\mathbf{i}\xi_2& \xi_3+\mathbf{i}\xi_4
\end{array}\right) \left(\begin{array}{r} a_1 \\ a_2
\end{array}\right) =\mathbf{0}
   \end{equation*}
  by comparing the second row of $L_4(y)$ with the first row of $M_4(y)$. Therefore $(a_1,a_2)=(0,0)$, and by repeating this procedure, we get $(a_1,\ldots,a_{2k})=\mathbf{0}$.   The result $ \dim(\ker L_4(y)\cap\ker M_4(y))= 0 $ is proved.

\subsection{ Proof of Theorem \ref{thm:canonical}}

\begin{prop}\label{prop:self-adjoint} For $k=2,3,\ldots$, the associated Laplacian operator $\Box_\varphi$ in (\ref{eq:Box-varphi}) is a densely-defined, closed, self-adjoint and non-negative operator on $L_\varphi^2( \Omega,  \odot^{k-1}\mathbb{C}^{2 } \otimes \mathbb{C}^{2 }
)$.
\end{prop}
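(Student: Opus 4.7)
The plan is to realise $\Box_\varphi$ as $Q^*Q$ for a single closed, densely-defined operator $Q$, and then to appeal to the theorem of von Neumann which guarantees that $A^*A$ is self-adjoint and non-negative whenever $A$ is closed and densely-defined. This is the standard route used in H\"ormander's treatment of the $\overline{\partial}$-Neumann problem (cf.\ \cite{Hor}, \cite{CS}), and it carries over to the present setting once the algebraic compatibility $\mathscr D_1 \mathscr D_0 = 0$ is upgraded to an operator-theoretic statement.

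Write $T := \mathscr D_0$ and $S := \mathscr D_1$ for the maximal $L^2$-extensions between the three Hilbert spaces in (\ref{eq:quaternionic-complex-diff-L2}). Both operators are closed (a maximal differential operator is always closed) and densely-defined, since $C_0^\infty$ lies in their domains. The identity $S T = 0$ extends from smooth inputs to all of ${\rm Dom}(T)$: if $u \in {\rm Dom}(T)$ then $Tu \in L^2$ and, by the computation in (\ref{eq:exact}), the distribution $S(Tu)$ vanishes, whence $Tu \in {\rm Dom}(S)$ with $STu = 0$. Dualising, $T^* S^* = 0$ on ${\rm Dom}(S^*)$, which is equivalent to the assertion that ${\rm Range}(T)$ and ${\rm Range}(S^*)$ are mutually orthogonal inside the middle space $L_\varphi^2(\Omega, \odot^{k-1}\mathbb{C}^2 \otimes \mathbb{C}^2)$.

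Introduce the auxiliary operator
$$Q f := (T^* f,\, S f), \qquad {\rm Dom}(Q) := {\rm Dom}(T^*) \cap {\rm Dom}(S),$$
regarded as a map from $L_\varphi^2(\Omega, \odot^{k-1}\mathbb{C}^2 \otimes \mathbb{C}^2)$ into the Hilbert direct sum of $L_\varphi^2(\Omega, \odot^k \mathbb{C}^2)$ and $L_\varphi^2(\Omega, \odot^{k-2}\mathbb{C}^2 \otimes \Lambda^2\mathbb{C}^2)$. Since ${\rm Dom}(Q) \supset C_0^\infty$, $Q$ is densely-defined, and closedness of $Q$ is immediate from the closedness of $T^*$ (the adjoint of a densely-defined operator is always closed) and of $S$. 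A direct computation of the adjoint, combined with the range-orthogonality above, yields
$${\rm Dom}(Q^*) = {\rm Dom}(T) \times {\rm Dom}(S^*), \qquad Q^*(v, G) = T v + S^* G,$$
so that ${\rm Dom}(Q^*Q)$ coincides with the domain of $\Box_\varphi$ prescribed in the statement, and $Q^*Q = T T^* + S^*S = \Box_\varphi$ there. Von Neumann's theorem then delivers self-adjointness, non-negativity, closedness, and density of the domain in one stroke.

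The only delicate step I anticipate is the adjoint identification, specifically the decoupling ${\rm Dom}(Q^*) = {\rm Dom}(T) \times {\rm Dom}(S^*)$: a priori $(v,G) \in {\rm Dom}(Q^*)$ only asserts that the functional $f \mapsto \langle T^* f, v\rangle_\varphi + \langle S f, G\rangle_\varphi$ is bounded on ${\rm Dom}(Q)$, and one must extract separate boundedness for each summand. This is where the orthogonality ${\rm Range}(T) \perp {\rm Range}(S^*)$ is essential: testing on elements of ${\rm Range}(T) \cap {\rm Dom}(T^*)$ kills the $S$-term (since $STa = 0$), while testing on elements of ${\rm Range}(S^*) \cap {\rm Dom}(S)$ kills the $T^*$-term, recovering $v \in {\rm Dom}(T)$ and $G \in {\rm Dom}(S^*)$ respectively. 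Once this separation is established the remaining verifications are routine operator theory.
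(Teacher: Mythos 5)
Your proof is correct in substance, but it follows a genuinely different route from the paper's. The paper offers no argument of its own: it defers to Proposition 4.2.3 of \cite{CS} (and Proposition 3.1 of \cite{Wa-weighted}), whose proof equips the form domain ${\rm Dom}(\mathscr D_0^*)\cap{\rm Dom}(\mathscr D_1)$ with the graph inner product $\langle \mathscr D_0^*f,\mathscr D_0^*g\rangle_\varphi+\langle \mathscr D_1 f,\mathscr D_1 g\rangle_\varphi+\langle f,g\rangle_\varphi$, uses the Riesz representation theorem to produce a bounded, everywhere-defined, symmetric inverse of $\Box_\varphi+I$, and deduces self-adjointness of $\Box_\varphi$ from that of its resolvent; this has the side benefit of setting up exactly the machinery later used to build $N_\varphi$. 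Your factorization $\Box_\varphi=Q^*Q$ with $Qf=(\mathscr D_0^*f,\mathscr D_1 f)$, followed by von Neumann's theorem, is the Gaffney-style alternative: lighter on construction, but with all the work concentrated in identifying $Q^*$, precisely as you anticipate. One step of your sketch needs tightening there: testing only on elements of ${\rm Range}(\mathscr D_0)\cap{\rm Dom}(\mathscr D_0^*)$ does not by itself show that $f\mapsto\langle \mathscr D_0^*f,v\rangle_\varphi$ is bounded on all of ${\rm Dom}(\mathscr D_0^*)$. Instead, decompose an arbitrary $f\in{\rm Dom}(\mathscr D_0^*)$ as $f_1+f_2$ with $f_1\in\ker \mathscr D_0^*$ and $f_2\in\overline{{\rm Range}(\mathscr D_0)}\subseteq\ker\mathscr D_1\subseteq{\rm Dom}(\mathscr D_1)$ (the inclusion coming from the extension of $\mathscr D_1\mathscr D_0=0$ to the maximal domain, which you correctly justify); then $f_2\in{\rm Dom}(Q)$ with $Qf_2=(\mathscr D_0^*f,0)$, so $\langle \mathscr D_0^*f,v\rangle_\varphi=\langle Qf_2,(v,G)\rangle_\varphi=\langle f_2,Q^*(v,G)\rangle_\varphi$ is bounded by $\|Q^*(v,G)\|_\varphi\,\|f\|_\varphi$, giving $v\in{\rm Dom}(\mathscr D_0)$; the symmetric decomposition using $\overline{{\rm Range}(\mathscr D_1^*)}\subseteq\ker\mathscr D_0^*$ recovers $G\in{\rm Dom}(\mathscr D_1^*)$. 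With that repair the decoupling ${\rm Dom}(Q^*)={\rm Dom}(\mathscr D_0)\times{\rm Dom}(\mathscr D_1^*)$ holds, ${\rm Dom}(Q^*Q)$ coincides with the domain prescribed after (\ref{eq:Box-varphi}), and von Neumann's theorem delivers the conclusion; both routes are sound.
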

  The proof  is exactly the same as the proof of proposition 4.2.3 of \cite{CS} for $\overline{\partial}$-complex once we have  the following estimate (\ref{eq:box-estimate}).  See proposition 3.1 in \cite{Wa-weighted} for a complete proof for the  $k$-Cauchy-Fueter complexes  on  weighted $L^2$  space over $\mathbb{R}^{4n}$.
\vskip 2mm
  {\it Proof of Theorem \ref{thm:canonical}.} (1) The   $L^2$ estimate (\ref{eq:L2-k}) in Theorem \ref{thm:L2-estimate} implies that
      \begin{equation*}
          C_0 \left \| g \right\|^2_{  \varphi  }  \leq       \left\| {\mathscr D}_0^*g  \right\|^2_{\varphi } + \left\|{\mathscr D}_1g \right\|^2_{\varphi }=( \Box_{\varphi } g,g )_\varphi\leq\left \| \Box_{\varphi } g \right\|_{  \varphi  }\left  \| g \right\|_{  \varphi  } ,
      \end{equation*} for $  g\in {\rm Dom} (\Box_{\varphi })$,   i.e.
      \begin{equation}\label{eq:box-estimate}
   C_0 \left  \| g \right\|_{  \varphi  }\leq \left \| \Box_{\varphi } g\right\|_{  \varphi  }.
      \end{equation}
 Thus $\Box_{\varphi }$ is injective. This together with   self-adjointness of  $\Box_{\varphi }$    in Proposition \ref{prop:self-adjoint} implies the density of the  range (cf. section 2 of chapter 8 in \cite{RN} for this general property of a densely defined injective self-adjoint operator). For   fixed $f\in  L_\varphi^2( \Omega,  \odot^{k-1
}\mathbb{C}^{2 } \otimes \mathbb{C}^{2 }
)$,  the complex anti-linear functional
 \begin{equation*}
    \lambda_f:\Box_{\varphi } g \longrightarrow \langle f, g\rangle_\varphi\end{equation*}
  is then well-defined on the  dense subspace $\mathcal{R}(\Box_{\varphi })$ of $ L_\varphi^2( \Omega,  \odot^{k-1
}\mathbb{C}^{2 } \otimes \mathbb{C}^{2 }
)$, and  is finite since
 \begin{equation*}
 | \lambda_f(\Box_{\varphi } g )|=  |\langle f, g\rangle_\varphi|\leq \|f\|_\varphi\|g\|_\varphi\leq  \frac 1{C_0} \|f\|_\varphi\|\Box_{\varphi } g\|_\varphi
 \end{equation*}
 for any $  g\in {\rm Dom} (\Box_{\varphi })$, by (\ref{eq:box-estimate}). So $\lambda_f$ can be uniquely extended a continuous complex anti-linear functional on $ L_\varphi^2( \Omega,  \odot^{k-1
}\mathbb{C}^{2 } \otimes \mathbb{C}^{2 }  )$. By the Riesz representation theorem, there exists a unique element $h\in  L_\varphi^2( \Omega,  \odot^{k-1
}\mathbb{C}^{2 } \otimes \mathbb{C}^{2 }  )$ such that $\lambda_f(k)=\langle h, k\rangle_\varphi$ for any $k\in L_\varphi^2( \Omega,  \odot^{k-1
}\mathbb{C}^{2 } \otimes \mathbb{C}^{2 } )$,
 and $\|h\|_\varphi= |\lambda_f|\leq \frac 1{C_0}\|f\|_\varphi$. In particular, we have
 \begin{equation*}
 \langle h,\Box_{\varphi }g \rangle_\varphi =  \langle f, g\rangle_\varphi
 \end{equation*} for    any $g\in {\rm Dom} (\Box_{\varphi })$. This implies that $h \in {\rm Dom} (\Box_{\varphi }^*)$ and $\Box_{\varphi }^* h=f$. By self-adjointness of  $\Box_{\varphi }$ in Proposition \ref{prop:self-adjoint}, we find that  $h \in {\rm Dom} (\Box_{\varphi })$ and $\Box_{\varphi } h=f$. We write $h=N_\varphi f$. Then $\left \|N_\varphi f \right\|_{  \varphi  }\leq\frac 1{C_0} \left\| f \right\|_{  \varphi  }$.

      (2)  Since $N_\varphi f\in {\rm Dom} (\Box_{\varphi })$, we have ${\mathscr D}_0^*N_\varphi f\in {\rm Dom} ({\mathscr D}_0)$, ${\mathscr D}_1  N_\varphi f\in {\rm Dom} ({\mathscr D}_1^*)$, and
      \begin{equation}\label{eq:N-f}
          {\mathscr D}_0{\mathscr D}_0^*N_\varphi f=f-  {\mathscr D}_1^*{\mathscr D}_1  N_\varphi f
      \end{equation}by $\Box_{\varphi } N_\varphi f=f$.
      Because $f$ and ${\mathscr D}_0 u$ for any $u\in {\rm Dom} ({\mathscr D}_0)$  are both  ${\mathscr D}_1$-closed (${\mathscr D}_1\circ {\mathscr D}_0 =0$ by (\ref{eq:exact})), the above identity implies $ {\mathscr D}_1^*{\mathscr D}_1  N_\varphi f\in {\rm Dom} ({\mathscr D}_1)$   and  so
      $
{\mathscr D}_1 {\mathscr D}_1^*{\mathscr D}_1  N_\varphi u=0$  by   ${\mathscr D}_1$ acting on both sides of (\ref{eq:N-f}). Then
    \begin{equation*}
     0=\langle  {\mathscr D}_1 {\mathscr D}_1^*{\mathscr D}_1  N_\varphi f, {\mathscr D}_1  N_\varphi f\rangle_\varphi=\left\|    {\mathscr D}_1^*{\mathscr D}_1  N_\varphi f\right\|_\varphi^2,
    \end{equation*}
    i.e. $
       {\mathscr D}_1^*  {\mathscr D}_1 N_\varphi f=0.
   $ Hence ${\mathscr D}_0 {\mathscr D}_0^*N_\varphi f =f$ by (\ref{eq:N-f}). Moreover, we have ${\mathscr D}_0^*N_\varphi f\perp A^2_{(k)}(\mathbb{R}^{4n},\varphi)$ since $\langle v,{\mathscr D}_0^*N_\varphi f \rangle_\varphi=\langle{\mathscr D}_0 v , N_\varphi f \rangle_\varphi=0$ for any $ v\in A^2_{(k)}(\mathbb{R}^{4n},\varphi)$. The  estimate  (\ref{eq:canonical-est}) follows from
   \begin{equation*}\hskip 30mm
    \| {\mathscr D}_0^*N_\varphi f\|_\varphi^2+ \| {\mathscr D}_1 N_\varphi f\|_\varphi^2  =\langle \Box_{\varphi } N_\varphi f,N_\varphi f \rangle_\varphi\leq  \frac 1{C_0}\|  f\|_\varphi^2.\hskip 40mm \Box
   \end{equation*}

\end{document}